\begin{document}

\newtheorem{thm}{Theorem}[section]
\newtheorem{lem}[thm]{Lemma}
\newtheorem{prop}[thm]{Proposition}
\newtheorem{cor}[thm]{Corollary}
\newtheorem{conj}[thm]{Conjecture}
\newtheorem{defn}[thm]{Definition}
\newtheorem{remark}{Remark}

\numberwithin{equation}{section}

\newcommand{\Z}{{\mathbb Z}} %cph changed from \mathbf
\newcommand{\Q}{{\mathbb Q}}
\newcommand{\R}{{\mathbb R}}
\newcommand{\C}{{\mathbb C}}
\newcommand{\N}{{\mathbb N}}
\newcommand{\FF}{{\mathbb F}}
\newcommand{\T}{{\mathbb T}}
\newcommand{\fq}{\mathbb{F}_q}
\newcommand{\fixme}[1]{\footnote{Fixme: #1}}

\def\scrA{{\mathcal A}}
\def\scrB{{\mathcal B}}
\def\scrD{{\mathcal D}}
\def\scrE{{\mathcal E}}
\def\scrH{{\mathcal H}}
\def\scrK{{\mathcal K}}
\def\scrL{{\mathcal L}}
\def\scrM{{\mathcal M}}
\def\scrN{{\mathcal N}}
\def\scrQ{{\mathcal Q}}
\def\scrS{{\mathcal S}}

\newcommand{\rmk}[1]{\footnote{{\bf Comment:} #1}}

\renewcommand{\mod}{\;\operatorname{mod}}
\newcommand{\ord}{\operatorname{ord}}
\newcommand{\TT}{\mathbb{T}}
\renewcommand{\i}{{\mathrm{i}}}
\renewcommand{\d}{{\mathrm{d}}}
\renewcommand{\^}{\widehat}
\newcommand{\HH}{\mathbb H}
\newcommand{\Vol}{\operatorname{vol}}
\newcommand{\area}{\operatorname{area}}
\newcommand{\tr}{\operatorname{tr}}
\newcommand{\norm}{\mathcal N} % norm =(\frac{ n+\sqrt{n^2-4}} 2)^2
\newcommand{\fnorm}[1]{\left\lVert #1 \right\rVert}
\newcommand{\intinf}{\int_{-\infty}^\infty}
\newcommand{\ave}[1]{\left\langle#1\right\rangle} %  average
\newcommand{\Var}{\operatorname{Var}}
\newcommand{\Cov}{\operatorname{Cov}}
\newcommand{\Prob}{\operatorname{Prob}}
\newcommand{\sym}{\operatorname{Sym}}
\newcommand{\disc}{\operatorname{disc}}
\newcommand{\CA}{{\mathcal C}_A}
\newcommand{\cond}{\operatorname{cond}} % conductor
\newcommand{\lcm}{\operatorname{lcm}}
\newcommand{\Kl}{\operatorname{Kl}} %Kloosterman sum
\newcommand{\leg}[2]{\left( \frac{#1}{#2} \right)}  % Legendre symbol
\newcommand{\SL}{\operatorname{SL}}

\newcommand{\be}{\begin{equation}}
\newcommand{\ee}{\end{equation}}
\newcommand{\bs}{\begin{split}}
\newcommand{\es}{\end{split}}
\newcommand{\bra}{\left\langle}
\newcommand{\ket}{\right\rangle}

\newcommand{\sumstar}{\sideset \and^{*} \to \sum}

\newcommand{\LL}{\mathcal L} %L-function of u
\newcommand{\sumf}{\sum^\flat}
\newcommand{\Hgev}{\mathcal H_{2g+2,q}}
\newcommand{\USp}{\operatorname{USp}}
\newcommand{\conv}{*}
\newcommand{\dist} {\operatorname{dist}}
\newcommand{\CF}{c_0} % Fejer constant
\newcommand{\kerp}{\mathcal K}

\newcommand{\gp}{\operatorname{gp}}
\newcommand{\Area}{\operatorname{Area}}

\newcommand{\Op}{\operatorname{Op}}
\newcommand{\Dom}{\operatorname{Dom}}

\title[Quantum Ergodicity for toral point scatterers]{Quantum
  Ergodicity for Point Scatterers on  Arithmetic Tori} 
\author{P\"ar Kurlberg}
\address{Department of Mathematics, KTH Royal Institute of Technology, SE-10044 \\ Stockholm, Sweden}
\email{kurlberg@math.kth.se}
\author{Henrik Uebersch\"ar}
\address{Institut de Physique Th\'eorique, CEA Saclay, 91191 Gif-sur-Yvette Cedex, France.}
\email{henrik.uberschar@cea.fr}
%\date{\today}
\date{\today}

\thanks{P.K. was partially supported by grants from 
the G\"oran Gustafsson Foundation
and the Swedish Research Council.}

\begin{abstract}
  We prove an analogue of Shnirelman, Zelditch and Colin de Verdi\`e-
  re's Quantum Ergodicity Theorems in a case where there is {\em no}
  underlying classical ergodicity. The system we consider is the
  Laplacian with a delta potential on the square torus.  There are two
  types of wave functions: old eigenfunctions of the Laplacian, which
  are not affected by the scatterer, and new eigenfunctions which have
  a logarithmic singularity at the position of the scatterer. We prove
  that a full density subsequence of the new eigenfunctions {\em
    equidistribute in phase space}. Our estimates are uniform with respect
    to the coupling parameter, in particular the equidistribution holds
    for both the weak and strong coupling quantizations of the point scatterer.
\end{abstract}

\maketitle

\section{Introduction}
The point scatterer, namely the Laplacian with a delta potential, on a
two-dimensional flat manifold is a popular model in the study of the
transition between chaos and integrability in quantum systems. In 1990
Seba \cite{Seba} considered this operator on a rectangle with
irrational aspect ratio and Dirichlet boundary conditions and argued
that the spectrum and eigenfunctions of the point scatterer display
features such as level repulsion and a Gaussian value distribution,
both of which are present in quantum systems with chaotic 
classical dynamics (cf. \cite{CdV2} and \cite{BohigasGiannoniSchmit}),
such as the quantization of the geodesic flow on hyperbolic manifolds
or the flow in the Sinai billiard. In fact the point scatterer can be understood as
a limit of the Sinai billiard where the radius shrinks to zero faster than the
semiclassical wavelength.

The subject of this paper is a point scatterer on a flat torus. It has two types of eigenfunctions: 
%\begin{itemize}
%\item[] 
first there are {\em old} eigenfunctions of the Laplacian, namely
those which vanish at the position of the scatterer; the nonzero eigenvalues
remain the same, though with multiplicities reduced by $1$.
%\item[] 
Secondly, there are {\em new} eigenfunctions which diverge
logarithmically near the position of the scatterer; the corresponding
eigenvalues have multiplicity $1$ and interlace with the old Laplace
eigenvalues.
%\end{itemize}

We shall  only be concerned with the set of new
eigenfunctions, i.e., the ones which are affected by the scatterer.  In
\cite{RU} it was proved that a full density subsequence\footnote{See
  Section~\ref{PureModes} for a precise definition of a ``full density
  subsequence''.} of the new eigenfunctions {\em equidistribute in
  position} space in the special case of a square torus. We extend the results of \cite{RU} and prove that a
full density subsequence of these eigenfunctions in fact {\em
  equidistribute in phase space} 
--- we thus establish an analogue of Shnirelman, Zelditch and Colin de
Verdi\`ere's Quantum Ergodicity Theorem in a case where there is no
underlying chaotic dynamics and no classical ergodicity.

An analogue of this result for a cubic 3D torus was recently obtained
by N. Yesha \cite{Yesha2}. The situation for a square torus is very
different from irrational tori, where the eigenfunctions are expected
to localise in phase space on a finite number of momentum vectors \cite{KeatingMarklofWinn2,BerkolaikoKeatingWinn2}. In
the case where the aspect ratio is diophantine this can be proven
rigorously for a full density subsequence of new eigenfunctions
\cite{KU2}.

\subsection{Spectrum of the point scatterer}
The formal operator 
% in general no sign condition here, only the weak coupling regime corresponds to alpha negative%
$$-\Delta+\alpha\delta_{x_0}, \quad \alpha\in\R$$
is realized using von Neumann's theory of self-adjoint extensions. We
simply state the most important facts in this section in order to
formulate the results of this paper. For a more detailed discussion of
the self-adjoint realization of the point scatterer we refer the
reader to the introduction and appendix of the paper \cite{RU}.

Let $\T^2=\R^2/2\pi\Z^2$. We consider the restriction of the positive
Laplacian $-\Delta$ to the
domain $$D_0=C^\infty_c(\T^2\setminus\{x_0\})$$ of functions which 
% I replaced H_0 by H.
vanish near the position of the scatterer: $$H=-\Delta|_{D_0}$$ The
operator $H$ is symmetric, but fails to be self-adjoint, in fact
$H$ has deficiency indices $(1,1)$. Self-adjoint extension theory
tells us that there exists a one-parameter family of self-adjoint
extensions $H_\varphi$, $\varphi\in(-\pi,\pi]$, which are restrictions
of the adjoint $H^*$ to the domain of functions $f\in \Dom(H^*)$
which satisfy the asymptotic
$$f(x)=C\left(\cos\left(\frac{\varphi}{2}\right)\frac{\log|x-x_0|}{2\pi}+\sin\left(\frac{\varphi}{2}\right)\right)+o(1), \quad x\to x_0$$
for some constant $C\in\C$. The case $\varphi=\pi$
corresponds to $\alpha=0$. In this paper we will study the operators
$H_\varphi$, $\varphi\in(-\pi,\pi)$.

The spectrum of the operator $H_\varphi$ consists of two parts:
``old'' and``new'' eigenvalues. Since $H_\varphi$ is a self-adjoint
realization of a rank one perturbation of the Laplacian, the effect is
that each nonzero old Laplace eigenvalue appears, with multiplicity reduced by
$1$, in the spectrum of $H_\varphi$. Further, each old Laplace
eigenvalue gives rise to a new eigenvalue with multiplicity $1$.  In
fact, these new eigenvalues {\em interlace} with the multiplicity one
sequence associated with the old Laplace eigenvalues.

There are two types of eigenfunctions of $H_\varphi$ associated with
the two parts of the spectrum:
\begin{itemize}
\item[(A)] ``Old'' eigenfunctions which vanish at $x_0$ and therefore
  are not affected by the scatterer. These are simply eigenfunctions
  of
  the unperturbed Laplacian.
\item[(B)] ``New'' eigenfunctions which feature a logarithmic
  singularity at $x_0$; in fact they are given by Green's functions
  $G_\lambda=(\Delta+\lambda)^{-1}\delta_{x_0}$.

\end{itemize} 

We will study how eigenfunctions of type (B) are distributed in phase
space as the eigenvalue tends to infinity. Denote by $S$ the set of
distinct eigenvalues of the Laplacian on $\T^2$, namely integers which
can be represented as a sum of two squares:
$$S:=\{n \in \Z : n =x^2+y^2 \mid x,y\in\Z\}$$
For given $n\in S$ denote its multiplicity by 
$$r_2(n):=\sum_{\substack{n=|\xi|^2 \\ \xi\in\Z^2}}1,$$ 
i.e. the number of ways $n$ can be
written as a sum of two squares.  

The eigenvalues of type (B) are solutions to the equation
\begin{figure}\label{figure}
\centering
\includegraphics[scale=0.65]{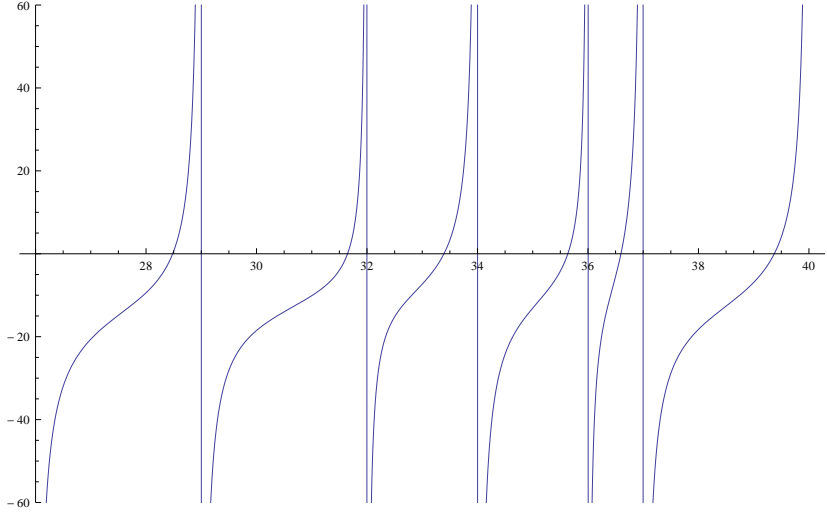}
\caption{The picture shows a plot of the l. h. s. of equation \eqref{weak coupling quantization} as a function of $\lambda$. The zeroes are the new eigenvalues corresponding to the self-adjoint extension with parameter $\varphi=0$.}
\end{figure}
\be\label{weak coupling quantization}
\sum_{n\in S}r_2(n)\left(\frac{1}{n-\lambda}-\frac{n}{n^2+1}\right)=c_0\tan\left(\frac{\varphi}{2}\right)
\ee
(see Figure~1 for a plot of the l. h. s.) where 
\be
c_0=\sum_{n\in S}\frac{r_2(n)}{n^2+1}.
\ee
As mentioned earlier, they interlace with the distinct Laplace
eigenvalues $$S=\{0<1<2<4<5<8<\cdots\}$$ as follows 
\be
\lambda_{0,\varphi}<0<\lambda_{1,\varphi}<1<\lambda_{2,\varphi}<2<\lambda_{4,\varphi}<4<\lambda_{5,\varphi}<5<\lambda_{8,\varphi}<8<\cdots
\ee
where the new eigenvalue associated with $n\in S$ is denoted by
$\lambda_{n,\varphi}$; note that $\lambda_{n,\varphi} < n$.

\subsection{Strong coupling}
In the physics literature equation \eqref{weak coupling quantization}
is referred to as a ``weak coupling'' quantization. In fact, (cf. \cite{RU2})
the new eigenvalues $\lambda_{m,\varphi}$ ``clump'' with the Laplace
eigenvalues $m\in S$ in the sense that for a full density subsequence
of S,
$$ 0 <m-\lambda_{m,\varphi} \ll \frac{1}{(\log  m)^{1-o(1)}}.$$ 
In particular the eigenvalue spacing distribution of the point
scatterer coincides with that of the Laplacian, and the effect of the
scatterer on the spectrum is quite weak in this quantization.  (In
a sense it corresponds to letting $\alpha \to 0$ as $\lambda
\to\infty$.)

Shigehara \cite{Shigehara1} and later Bogomolny, Gerland and Schmit 
\cite{BogomolnyGerlandSchmit}, with the intent of finding a model
exhibiting level repulsion, 
considered 
another 
quantization
sometimes referred to as a ``strong coupling'' quantization. 
There are various ways to arrive at
this quantization condition from equation \eqref{weak coupling
  quantization}. 
For example, one may truncate the summation outside an energy window
of size $O(\lambda^\delta)$ where $\delta>0$ is fixed, and the new
eigenvalues
of the strong coupling quantization are then defined to be the
solutions to the equation \be \label{strong coupling quantization}
\sum_{\substack{n\in S \\ |n-n_+(\lambda)|\leq
    n_+(\lambda)^\delta}}r_2(n)\left(\frac{1}{n-\lambda}-\frac{n}{n^2+1}\right)=c_0\tan\left(\frac{\varphi}{2}\right),
\ee where $n_+(\lambda)$ denotes the smallest element of $S$ which is
larger than $\lambda$.  (With $\lambda$ denoting such a
solution, the corresponding ``new'' eigenfunction is defined as a
certain Green's function $G_{\lambda}$,
cf. Section~\ref{sec:semicl-meas}.) 
A  summation by parts argument (see for instance Lemma 3.1 in
\cite{U2}) shows that
$$
\sum_{\substack{n\in S \\ |n-n_+(\lambda)|>
    n_+(\lambda)^\delta}}r_2(n)\left(\frac{1}{n-\lambda}-\frac{n}{n^2+1}\right)
= -\pi \log \lambda + O_{\delta}(1)
$$
and hence the truncation given by \eqref{strong coupling quantization}
is equivalent to a logarithmic    
renormalisation of the r.~h.~s. of \eqref{weak coupling quantization},
namely, as $\lambda \to \infty$, \be
\label{renormalization}
\sum_{n\in
  S}r_2(n)\left(\frac{1}{n-\lambda}-\frac{n}{n^2+1}\right)=
-\pi(1+o_\delta(1))\log\lambda =
c_0\tan\left(\frac{\varphi_\lambda}{2}\right), \ee if we allow 
$\varphi_{\lambda}$ to depend on $\lambda$ appropriately, and where the
$o_\delta(1)$ error term depends on the exponent $\delta$.
Since the error term depends on $\delta$ we note that there is no
unique choice of strong coupling quantization; the key point is
matching the leading order logarithmic term.
The renormalization in \eqref{renormalization} can be viewed as
letting a boundary condition vary with the energy.  Consequently
$D_{\varphi_{\lambda}}$, the domain of the operator
$H_{\varphi_{\lambda}}$ is varying; this setting is reminiscent of
problems in semiclassical analysis where boundary conditions are
allowed to depend on the semiclassical parameter $\hbar$.

\begin{remark}
  In the weak coupling quantization the lowest new eigenvalue is
  always negative, but for the strong coupling quantization the
  lowest new eigenvalue may be either positive or negative. In the
  case of a positive lowest new eigenvalue, this eigenvalue would be
  denoted $\lambda_1$ to keep our notation consistent, in particular
  ensuring that $\lambda_n<n$ for $n \in S$ always holds.
\end{remark}

%\begin{remark}
We remark that in the statement of our main result, Theorem \ref{QE},
the sequence $\Lambda=\{\lambda_n\}$ will denote {\em {\bf any}
  increasing sequence of numbers which interlace with $S$}.  In
particular, it applies to the eigenvalues of the weak, as well as the
strong, coupling quantizations.
%\end{remark}

\subsection{Semiclassical Measures}
\label{sec:semicl-meas}
Let $a\in C^\infty(S^*\T^2)$. Denote by $\Op(a)$ a zero-order
pseudo-differential operator associated with
$a$ (see subsection \ref{PseudoDiffCalc} for more details.)

Let $g_\lambda=G_\lambda/\|G_\lambda\|_2$,
$\lambda\notin S$, where we recall that $S$ denotes the set of Laplace
eigenvalues and that $G_\lambda=(\Delta+\lambda)^{-1}\delta_{x_0}$. We are
interested in 
weak limits of measures $d\mu_\lambda$ defined by the identity
\be\label{semimeas} \bra \Op(a)g_\lambda,g_\lambda\ket=\int_{S^*\T^2} a
d\mu_\lambda.  \ee

\subsection{Main Result}
The following theorem holds generally for the $L^2$-normalized Green's
functions $g_\lambda$.  It states that the measures $d\mu_\lambda$
defined by \eqref{semimeas} converge weakly to Liouville measure as
$\lambda \to \infty$ along a full density subsequence of {\em any}
increasing sequence $\Lambda$ which interlaces with $S$.  (Recall that $S$
denotes set of unperturbed Laplace eigenvalues, namely the set of
integers which can be represented as a sum of two squares.)

\begin{thm}\label{QE}
  Let $\Lambda$ be an increasing sequence which interlaces with $S$. For $m
  \in S$, denote
  by $\lambda_m$ the largest element of $\Lambda$ which is smaller than
  $m\in S$. There exists a full density subsequence $S'\subset S$,
  that does not depend on $\Lambda$, such that for all $a\in
  C^\infty(S^*\T^2)$, \be \lim_{\substack{m\to\infty \\ m\in S'}}\bra
  \Op(a)g_{\lambda_m},g_{\lambda_m}\ket = \int_{S^*\T^2}
  a(x,\varphi)\frac{dx\;d\varphi}{\Vol(S^*\T^2)}.  \ee
\end{thm}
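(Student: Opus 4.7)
My plan is to follow the standard variance paradigm for quantum ergodicity. The first step is to reduce Theorem~\ref{QE} to the mean-square bound
\begin{equation*}
\frac{1}{|S \cap [0,X]|} \sum_{\substack{m \in S \\ m \leq X}} \left| \bra \Op(a) g_{\lambda_m}, g_{\lambda_m} \ket - \int_{S^*\T^2} a \, \frac{dx\, d\varphi}{\Vol(S^*\T^2)} \right|^2 = o(1)
\end{equation*}
as $X \to \infty$, for each $a$ in a countable dense subset of $C^\infty(S^*\T^2)$. A diagonal argument then produces a single full density subsequence $S' \subset S$ along which the matrix elements converge for every $a$. To obtain a subsequence \emph{independent of $\Lambda$}, I would prove the variance bound with implicit constants uniform over $\lambda_m$ in its admissible range $(n_-(m), m)$, where $n_-(m)$ is the largest element of $S$ strictly less than $m$; the specific location of $\lambda_m$ in that interval can then be absorbed into the estimate.

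The starting point is the Fourier expansion $G_\lambda(x) = \sum_{\xi \in \Z^2} e^{i\xi \cdot (x - x_0)} / (|\xi|^2 - \lambda)$, from which
\begin{equation*}
\bra \Op(a) G_\lambda, G_\lambda \ket = \sum_{\xi, \eta \in \Z^2} \frac{e^{i(\eta - \xi) \cdot x_0}\, \hat a(\eta - \xi;\, \xi/|\xi|)}{(|\xi|^2 - \lambda)(|\eta|^2 - \lambda)},
\end{equation*}
where $\hat a(\zeta; \theta)$ denotes the Fourier coefficient of $a(\cdot, \theta)$ in the position variable. After dividing by $\|G_\lambda\|_2^2 = \sum_\xi (|\xi|^2 - \lambda)^{-2}$, I split the symbol as $a = \bar a + a_1 + a_2$, where $\bar a$ is the Liouville mean, $a_1(x)$ depends on $x$ alone (the angular mean of $a$ minus $\bar a$), and $a_2$ has vanishing angular average at every fixed $x$. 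For $a_1$ the matrix element collapses to $\int_{\T^2} |g_{\lambda_m}|^2\, a_1\, dx$, and the desired convergence is exactly the position-space equidistribution theorem of \cite{RU}; one only needs to verify that the proof there is uniform in $\lambda_m \in (n_-(m), m)$.

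The genuinely new content is the angular part $a_2$. Its diagonal contribution $\xi = \eta$ from the dominant shell $|\xi|^2 = m$ equals
\begin{equation*}
\frac{(m - \lambda_m)^{-2}}{\|G_{\lambda_m}\|_2^2} \sum_{|\xi|^2 = m} \hat a_2(0;\, \xi/|\xi|),
\end{equation*}
which tends to zero along a full density subsequence of $m \in S$ by the classical equidistribution of the $r_2(m)$ angles $\{\xi/|\xi| : |\xi|^2 = m\}$ on the unit circle (Erd\H{o}s--Hall, K\'atai--K\"ornyei, etc.); diagonal contributions from neighbouring shells $n \in S$ close to $m$ are handled analogously. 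The main obstacle is the off-diagonal terms with $|\xi|^2 \neq |\eta|^2$: after squaring and averaging over $m \in S \cap [0, X]$, one is led to count quadruples of lattice points $(\xi, \eta, \xi', \eta')$ lying in thin annuli about $\sqrt m$ with prescribed differences $\eta - \xi$ and $\eta' - \xi'$, weighted by the rapidly decaying Fourier coefficients of $a_2$ and by the denominators $[(|\xi|^2 - \lambda_m)(|\eta|^2 - \lambda_m)]^{-1}$. Obtaining the required $o(1)$ bound, uniformly over $\lambda_m \in (n_-(m), m)$, is the crux of the argument, and depends on arithmetic estimates specific to Gaussian integers on circles; this is also why the theorem is specific to the square torus, consistent with the qualitatively different momentum localisation expected on irrational tori mentioned in the introduction.
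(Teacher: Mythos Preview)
Your plan diverges from the paper's in strategy, and more importantly it misplaces the crux of the argument.

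The paper does \emph{not} use a variance paradigm. It constructs, by purely arithmetic means, a full density set $S_1 \subset S$ (defined by nine conditions on gap sizes between consecutive elements of $S$, the normal order of $r_2$, and the size of the Weyl sums $w_k(n)=\sum_{|z|^2=n}(z/|z|)^k$) and proves directly that for every $m\in S_1$ the individual matrix element $\langle \Op(e_{0,k})g_{\lambda_m},g_{\lambda_m}\rangle$ is $\ll(\log\lambda_m)^{1/4-(\log 2)/2+o(1)}$. Since $S_1$ is defined using only the Laplace spectrum, independence of $\Lambda$ is automatic; no uniform-in-$\lambda_m$ second moment is needed.

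You have also inverted the difficulty. What you call the ``off-diagonal terms with $|\xi|^2\neq|\eta|^2$'' are the mixed modes $e_{\zeta,k}$ with $\zeta\neq 0$, and these are \emph{already} handled by the Rudnick--Uebersch\"ar input you invoke for $a_1$: their argument proceeds via the shift bound
\[
|\langle \Op(e_{\zeta,k})g_\lambda,g_\lambda\rangle|\le \frac{\sum_\xi |c(\xi)||c(\xi-\zeta)|}{\sum_\xi |c(\xi)|^2},
\]
which is \emph{independent of $k$} and therefore covers $e_{\zeta,k}$ for all $k$, not just $k=0$. No separate variance analysis of off-diagonal quadruples is required.

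The genuine obstacle is the pure momentum (diagonal) part, which you treat too lightly. For $e_{0,k}$ the matrix element equals
\[
\frac{\lambda_m^{-2}+\sum_{n\in S\setminus\{0\}} w_k(n)/(n-\lambda_m)^2}{\lambda_m^{-2}+\sum_{n\in S\setminus\{0\}} r_2(n)/(n-\lambda_m)^2},
\]
so knowing that $w_k(m)/r_2(m)\to 0$ for typical $m$ (your angular-equidistribution input) is not enough: one must control the entire weighted sum, and shells $n$ near $m$ with atypically large $|w_k(n)|$ could dominate. Your sentence ``diagonal contributions from neighbouring shells\ldots\ are handled analogously'' does not address this. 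The paper closes this gap via Lemma~\ref{lem:claim-lemma}, showing $\sum_{n\neq m,m_-}|w_k(n)|/(m-n)^2\ll(\log m)^{-3/4+o(1)}$ for $m\in S_1$; the mechanism is that whenever $|w_k(n)|\ge W$ is large, sufficiently many elements of $S$ must lie between $m$ and $n$ (properties~(\ref{item:remove-w-bad}) and~(\ref{item:remove-w-really-bad}) of $S_1$, enforced by removing neighbours of each ``bad'' $n$), forcing $|n-m|$ to be large. Without this, the diagonal estimate does not close.
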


As already noted,  the theorem holds in particular
for the new eigenvalues of the weak and strong coupling
quantizations of a point scatterer. Hence we have the following
corollary of Theorem~\ref{QE}. 
\begin{cor}
Quantum Ergodicity holds for the new eigenfunctions of weakly, as
well as strongly, coupled point scatterers on $\T^2$. 
\end{cor}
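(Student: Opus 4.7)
The plan is to combine a Fourier-theoretic computation of the matrix element with arithmetic results on lattice points on circles. I would first reduce to a countable family of test symbols of the form $a_{\zeta,h}(x,\theta) = e^{i\zeta\cdot x} h(\theta)$ with $\zeta \in \Z^2$ and $h \in C^\infty(S^1)$, since these span a dense subspace of $C^\infty(S^*\T^2)$. Standard pseudo-differential calculus gives
$$
\langle \Op(a_{\zeta,h}) e_\eta, e_\xi\rangle = \delta_{\xi-\eta,\zeta}\, h\!\left(\tfrac{\eta}{|\eta|}\right) + O(|\eta|^{-1}),
$$
and expanding $G_\lambda = \tfrac{1}{2\pi}\sum_\xi \tfrac{e^{-i\xi\cdot x_0}}{|\xi|^2-\lambda}\, e_\xi$ in Fourier series gives, to leading order,
$$
\langle \Op(a_{\zeta,h}) g_\lambda, g_\lambda\rangle \;\approx\; \frac{e^{-i\zeta\cdot x_0}}{4\pi^2 \|G_\lambda\|_2^2}\, \sum_{\eta\in\Z^2} \frac{h(\eta/|\eta|)}{(|\eta+\zeta|^2-\lambda)\,(|\eta|^2-\lambda)}.
$$
The Liouville average of $a_{\zeta,h}$ equals $\delta_{\zeta,0}\,\bar h$ with $\bar h = \tfrac{1}{2\pi}\int h$, so two cases arise.

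For the diagonal case $\zeta = 0$, group the sum by shells $|\eta|^2 = n$ to rewrite the ratio as
$$
\frac{\sum_{\eta} h(\eta/|\eta|)(|\eta|^2-\lambda)^{-2}}{\sum_\eta (|\eta|^2-\lambda)^{-2}} \;=\; \bar h \;+\; \frac{\sum_{n\in S} (n-\lambda)^{-2}\bigl(H(n) - \bar h\, r_2(n)\bigr)}{\sum_{n\in S} r_2(n)(n-\lambda)^{-2}},
$$
where $H(n) = \sum_{|\eta|^2=n} h(\eta/|\eta|)$. The classical equidistribution of angles of Gaussian integers on circles implies $H(n) = \bar h\, r_2(n) + o(r_2(n))$ along a full density subsequence $S'_h \subset S$. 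A dyadic decomposition of the sum, together with the lower bound $\|G_\lambda\|_2^2 \gtrsim r_2(m)/(m-\lambda)^2 + r_2(m^-)/(\lambda-m^-)^2$ for $\lambda = \lambda_m \in (m^-,m)$, where $m^-$ is the predecessor of $m$ in $S$, forces the error term to vanish along a full density subset of $S'_h$.

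For the off-diagonal case $\zeta \neq 0$, I would again split by shells $|\xi|^2 = n$, $|\eta|^2 = n'$. The terms $n = n'$ near $\lambda$ dominate; controlling them requires that, along a full density subsequence $S'_\zeta \subset S$,
$$
\#\bigl\{(\xi,\eta)\in\Z^2\times\Z^2 \,:\, |\xi|^2 = |\eta|^2 = n,\ \xi-\eta = \zeta\bigr\} \;=\; o(r_2(n)),
$$
which is the kind of shifted convolution / pair correlation estimate for lattice points on circles available in the literature. The mixed-shell contributions ($n \neq n'$) are handled by the decay of the factors $(n-\lambda)^{-1}(n'-\lambda)^{-1}$ together with a Cauchy--Schwarz bound against $\|G_\lambda\|_2^2$.

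Finally I would diagonalize over a countable dense family $\{a_{\zeta_j, h_j}\}$, intersecting the full density subsequences produced above to obtain a single $S' \subset S$ along which the matrix element converges to the Liouville average for every test symbol. Since $\lambda_m$ enters only through its location in $(m^-, m)$ and all estimates are uniform there, the subsequence $S'$ depends only on $S$, not on $\Lambda$. The principal obstacle is precisely this last uniformity: in the strong coupling quantization $\lambda_m$ need not cluster near $m$, so the shells $|\eta|^2 = m$ and $|\eta|^2 = m^-$ may contribute comparably to $\|G_\lambda\|_2^2$, and the angular equidistribution and pair-correlation estimates must therefore hold simultaneously on both shells along a full density set of $m \in S$.
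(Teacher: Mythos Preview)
Your overall architecture matches the paper's: reduce to elementary observables $e_{\zeta,k}$, treat $\zeta=0$ and $\zeta\neq 0$ separately, then diagonalize; and for $\zeta\neq 0$ the paper simply quotes the earlier Rudnick--Uebersch\"ar result. (Your sketch for that case is not quite the same mechanism, by the way: the same-shell count is indeed $O(1)$, but ``Cauchy--Schwarz against $\|G_\lambda\|_2^2$'' for the mixed-shell terms only recovers the trivial bound $\le 1$. The actual argument truncates to an annulus $||\xi|^2-\lambda|\le L$ and uses that, for generic $m$, lattice points in the annulus are separated by more than $|\zeta|$, forcing $\xi-\zeta$ outside the annulus so that $|c(\xi-\zeta)|\le 1/L$.)

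The genuine gap is in the pure-momentum case $\zeta=0$. Your input is the qualitative Erd\H os--Hall equidistribution $H(n)=\bar h\,r_2(n)+o(r_2(n))$ along a density-one subset $S'_h\subset S$, after which you assert that ``a dyadic decomposition'' of the numerator $\sum_n(n-\lambda)^{-2}(H(n)-\bar h\,r_2(n))$ suffices. But that sum runs over \emph{all} $n\in S$, not just $S'_h$, and the weights $(n-\lambda)^{-2}$ are so sharply concentrated near $\lambda$ that a single $n\notin S'_h$ close to $\lambda$ can dominate. Nor can you simply bound the tail by $\sum_{n\ne m,m_-}r_2(n)/(n-\lambda)^2$: for typical $m$ this tail is \emph{comparable} to the two leading terms, not $o(1)$ of them, so the argument stalls. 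What the paper does instead (all of Section~3) is substantially more delicate. It establishes the quantitative $L^2$-bound $\sum_{n\le x}|w_k(n)|^2\ll x$ via Wirsing's mean-value theorem and Hecke's angular equidistribution of Gaussian primes, and combines it with Rieger's pair-correlation bound for sums of two squares to construct a density-one set $S_1\subset S$ satisfying nine simultaneous arithmetic constraints: $|w_k(m)|,|w_k(m_-)|\le(\log m)^{1/4+o(1)}$; $r_2(m),r_2(m_-)=(\log m)^{(\log 2)/2+o(1)}$; gaps $m-m_-$, $m_+-m$, $m_--m_{--}$ all of size $(\log m)^{1/2\pm o(1)}$; and, crucially, that any $n$ with $|w_k(n)|\ge W$ is separated from $m$ by at least $\gg W^{2-o(1)}$ intervening elements of $S$. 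These together give the tail estimate $\sum_{n\ne m,m_-}|w_k(n)|/|m-n|^2\ll(\log m)^{-3/4+o(1)}$, and the proof closes only thanks to the numerical inequality $(\log 2)/2>1/4$. None of this machinery is captured by a bare dyadic decomposition.
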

\begin{remark}
  Recall that the new eigenvalues in the strong coupling limit are
  given by the set of solutions $\{ \lambda_{m}\}_{m}$ to
  \eqref{renormalization} (or alternatively, solutions to
  \eqref{strong coupling quantization}), with corresponding new
  eigenfunctions given by the Green's functions $G_{\lambda_{m}}$.
  Although these Green's functions are eigenfunctions of different
  operators $\{ H_{\varphi_{\lambda_{m}}}\}_{m}$ (in fact, the domains
  of the operators change), it is natural to say that quantum
  ergodicity holds in the strong coupling limit if a full density
  subset of the collection of new eigenfunctions equidistribute.
\end{remark}

We further note that the counting function, or Weyl's law, for the set
of new eigenvalues (cf. Theorem~\ref{thm:landau}) satisfies
$$|\{ n : \lambda_n \leq x\}|
\ll \frac{x}{ \sqrt{\log x}}  = o(x),
$$
while the counting function for the full set of eigenvalues (new and
old, with multiplicity) is the same as for the unperturbed Laplacian,
hence $ \gg x$.  Consequently, the sequence of new eigenvalues is of
density zero within the full spectrum, and the approach of proving
Quantum Ergodicity for the set of new eigenfunctions by computing
first or second moments of matrix coefficients (e.g., see
\cite{Zelditch}) with respect to the full set of eigenfunctions seems
unlikely to succeed.

\subsection{Acknowledgements}
\label{sec:acknowledgements}
We would like to thank Zeev Rudnick and Stephane Nonnenmacher for
valuable discussions about this problem and for many helpful remarks
which have led to the improvement of this paper.  

The authors are also very grateful to the referee for a careful reading of
the paper and for many comments and suggestions that improved the
exposition.

\section{The matrix elements}

\subsection{Quantization of phase space observables}\label{PseudoDiffCalc}
Consider a classical symbol $a\in C^\infty(S^*\T^2)$, where
$S^*\T^2\simeq \T^2\times S^1$ denotes the unit cotangent bundle of
$\T^2$. We may expand $a$ in the Fourier series 

(note that the Fourier coefficients
decay rapidly since $a$ is smooth)
\begin{equation}
a(x,\phi)=\sum_{\zeta\in\Z^2,k\in\Z}\hat{a}(\zeta,k)e^{\i \left\langle \zeta,x \right\rangle+\i k\phi}.
\end{equation}

We choose a complex realization of the unit cotangent bundle $S^*\T^2$
and parametrise the unit circle $S^1$ at position $x\in\T^2$ by the
complex exponential map $\varphi \mapsto e^{\i\varphi}$.
We now want to associate with $a$ a pseudodifferential
operator
$\Op(a):C^\infty(\T^2)\to C^\infty(\T^2)$.  We choose
the following symbol (we associate with $\xi=(\xi_1,\xi_2)$ the complex number
$\tilde{\xi}:=\xi_1+\i\xi_2$ and note that $e^{\i k\arg\tilde{\xi}}=(\tilde\xi/|\tilde\xi|)^k$)
\be \sigma_{a}(x,\xi)=
\begin{cases}
\sum_{\zeta\in\Z^2, k\in\Z}
\hat{a}(\zeta,k)\left(\frac{\tilde{\xi}}{|\tilde{\xi}|}\right)^k
e^{\i\left\langle \zeta,x \right\rangle}, \quad \xi\neq0\\ 
\\
\sum_{\zeta\in\Z^2, k\in\Z} \hat{a}(\zeta,k)e^{\i\left\langle \zeta,x \right\rangle}, \quad \xi=0.
\end{cases}
\ee

Claim: The symbol $\sigma_{a}$, as defined above, belongs to the class
of toroidal symbols $S^{0}_{1,0}(\T^2\times\Z^2)$ as defined in
\cite{RT}, Part II, Section 4.1.2, Defn. 4.1.7, p. 344.
To see this, define the difference operators $$\Delta_{\xi_j}
f(\xi)=f(\xi+e_j)-f(\xi),$$ where $e_1=(1,0)$, $e_2=(0,1)$. By the
mean value theorem for repeated differences, for $f : \R^2 \to \R$ a
smooth function,
$$
\Delta_{\xi_1}^{\beta_{1}} \Delta_{\xi_2}^{\beta_{2}}
f(\xi)=
\partial_{\xi_1}^{\beta_{1}}
\partial_{\xi_2}^{\beta_{2}} f( \xi) \Big|_{\xi=\xi'}, 
$$
for some $\xi'=\xi+(\beta_{1}',\beta_{2}')$ with
$(\beta_{1}',\beta_{2}') \in [0,\beta_1] \times [0,\beta_{2}] $. 
With $f_{k}(\xi)$ denoting the real, or imaginary, part of $(
  \tilde{\xi}/|\tilde{\xi}|)^k$, a quick calculation then gives
that for integers $\alpha_1,\alpha_{2},\beta_1,\beta_{2} \geq 0$,
$$
\left| \partial_{x_1}^{\alpha_1}\partial_{x_2}^{\alpha_2}
\Delta_{\xi_1}^{\beta_1}\Delta_{\xi_2}^{\beta_2} 
f_{k}(\xi)
e^{\i\left\langle \zeta,x \right\rangle} \right|
\ll_{\alpha_1,\alpha_{2},\beta_1,\beta_{2}}
k^{\beta_1+\beta_2}
|\zeta_1|^{\alpha_{1}} |\zeta_2|^{\alpha_{2}}
(1+|\xi|)^{-\beta_1-\beta_2}
$$
This bound, together with the rapid decay of Fourier coefficients of
$a$, implies that
$$|\partial_{x_1}^{\alpha_1}\partial_{x_2}^{\alpha_2}
\Delta_{\xi_1}^{\beta_1}\Delta_{\xi_2}^{\beta_2}\sigma_{a}(x,\xi)|
\leq C_{a,\alpha_1,\alpha_2,\beta_1,\beta_2}
(1+|\xi|)^{-\beta_1-\beta_2},
$$ 
thus confirming the claim.

The action of the pseudodifferential operator $\Op(a)$ is then defined by multiplication on the Fourier side, analogously to Defn. 4.1.9 in \cite{RT}. Therefore, we have
\be 
\begin{split}
(\Op(a)f)(x)=&\sum_{\xi\in\Z^2}\sigma_{a}(x,\xi)\hat{f}(\xi)e^{\i\left\langle\xi,x\right\rangle}\\
=&\sum_{\xi\in\Z^2\setminus\{0\}}\sum_{\zeta\in\Z^2, k\in\Z} \hat{a}(\zeta,k)\left(\frac{\tilde{\xi}}{|\tilde{\xi}|}\right)^k 
\hat{f}(\xi) e^{\i\left\langle\xi+\zeta,x\right\rangle}\\
&+\sum_{\zeta\in\Z^2, k\in\Z} \hat{a}(\zeta,k)\hat{f}(0)e^{\i\left\langle \zeta,x \right\rangle}
\end{split}
\ee

We can now read off the action of $\Op(a)$ on the Fourier coefficients:
\begin{equation}\label{pseudo}
\widehat{(\Op(a)f)}(\xi)=\sum_{\zeta\in\Z^2,k\in\Z}\hat{a}(\zeta,k)
\left(\frac{\tilde{\xi}-\tilde{\zeta}}{|\tilde{\xi}-\tilde{\zeta}|}\right)^k 
\hat{f}(\xi-\zeta), \quad \xi\neq\zeta
\end{equation}
(recall that $\tilde{\xi}:=\xi_1+\i\xi_2$ and that the Fourier
coefficients $\hat{a}(\zeta,k)$ decay rapidly) and
\begin{equation}
\widehat{(\Op(a)f)}(\zeta)=\sum_{\zeta\in\Z^2,k\in\Z}\hat{a}(\zeta,k)\hat{f}(0).
\end{equation}
In terms of the Fourier coefficients the matrix elements of $\Op(a)$
can be written as 
\begin{equation}
\left\langle \Op(a)f,f \right\rangle=\sum_{\xi\in\Z^2} \widehat{(\Op(a)f)}(\xi)\overline{\hat{f}(\xi)}
\end{equation}
In particular, for the observable 
$e_{\zeta,k}(x,\phi)=e^{\i\left\langle
    \zeta,x 
  \right\rangle+\i k\phi}$, we have
\begin{equation}
\left\langle \Op(e_{\zeta,k})f,f
\right\rangle=\sum_{\xi\in\Z^2\setminus\{\zeta\}}
\left(\frac{\tilde{\xi}-\tilde{\zeta}}{|\tilde{\xi}-\tilde{\zeta}|}\right)^k\overline{\hat{f}(\xi)}\hat{f}(\xi-\zeta)+\overline{\hat{f}(\zeta)}\hat{f}(0). 
\end{equation}

\subsection{Mixed modes}
If $\zeta\neq0$, we have the bound
\begin{equation}\label{shift bound}
|\left\langle \Op(e_{\zeta,k})f,f \right\rangle|\leq\sum_{\xi\in\Z^2} |\hat{f}(\xi)||\hat{f}(\xi-\zeta)|.
\end{equation}
In the case $f=g_\lambda=G_\lambda/\|G_\lambda\|_2$ we have the $L^2$-expansion $$G_\lambda(x,x_0)=\frac{1}{4\pi^2}\sum_{\xi\in\Z^2} c(\xi)e^{\i\left\langle x,\xi \right\rangle}$$
where $c(\xi)=\frac{1}{|\xi|^2-\lambda}$.
We obtain
\begin{equation}
|\left\langle \Op(e_{\zeta,k})g_\lambda,g_\lambda \right\rangle|\leq
\frac{\sum_{\xi\in\Z^2} |c(\xi)||c(\xi-\zeta)|}{\sum_{\xi\in \Z^2} |c(\xi)|^2}.
\end{equation}
In \cite{RU} it was proved that there exists a full density
subsequence $S'\subset S$ such that for any nonzero lattice vector
$\zeta\in\Z^2$ the matrix elements of $\Op(e_{\zeta,k})$ vanish as
$n\to\infty$ along $S'$. The following result was obtained.

\begin{thm}\label{mixed}{\bf (Rudnick-U., 2012)}
Let $\Lambda$ be an increasing sequence which interlaces with $S$. Denote by
$\lambda_m$ the largest element of $\Lambda$ which is smaller than $m\in S$.
There exists a subsequence $S'\subset S$ of full density such that for
any $\zeta\in\Z^2$, $\zeta\neq0$, $k\in\Z$ 
\begin{equation}
\lim_{\substack{n\to\infty \\ n\in S'}}\left\langle \Op(e_{\zeta,k})g_{\lambda_n},g_{\lambda_n}\right\rangle=0.
\end{equation}
\end{thm}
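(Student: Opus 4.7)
By the bound (\ref{shift bound}) applied to $g_{\lambda_n}$, it suffices to show that the ratio
$$R(\lambda_n) \;:=\; \frac{\sum_{\xi \in \Z^2} |c(\xi)||c(\xi-\zeta)|}{\sum_{\xi \in \Z^2} |c(\xi)|^2}, \qquad c(\xi) := \frac{1}{|\xi|^2 - \lambda_n},$$
tends to zero as $n \to \infty$ along a density-one subset $S' \subset S$ defined purely arithmetically in terms of $n$, so that the same $S'$ serves every interlacing sequence $\Lambda$. Let $n^* \in \{n, n_-\}$ denote the element of $S$ closest to $\lambda_n$, where $n_-$ is the predecessor of $n$ in $S$, and set $\tau := |\lambda_n - n^*|$. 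Restricting to $\xi$ with $|\xi|^2 = n^*$ in the denominator yields the lower bound $\sum_\xi |c(\xi)|^2 \geq r_2(n^*)/\tau^2$.

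For the numerator I would group lattice points by the pair $(m,m') := (|\xi|^2, |\xi-\zeta|^2) \in S \times S$. The essential geometric input is that two distinct circles in $\R^2$ meet in at most two points, so each pair $(m,m')$ is realised by at most two lattice points $\xi$. The \emph{resonant} contribution, coming from $(m,m')=(n^*, n^*)$, is then at most $2/\tau^2$, which yields a contribution of at most $2/r_2(n^*)$ to $R(\lambda_n)$. This vanishes on a density-one subset $S'$ on which $r_2(n) \to \infty$ and $r_2(n_-) \to \infty$, whose existence follows from standard normal-order results for the number of representations of an integer as a sum of two squares. The ``off-resonance'' contributions, in which at most one of $m, m'$ equals $n^*$, are handled by a dyadic decomposition in $|m - \lambda_n|$ (and symmetrically in $|m' - \lambda_n|$), combined with Landau's asymptotic count of sums of two squares in a short interval; together with the two-intersection bound this yields an off-resonance contribution of $o(1)$ to $R(\lambda_n)$ on a density-one subset.

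The principal obstacle is the required uniformity with respect to $\Lambda$: since $\lambda_n$ may sit anywhere in the gap $(n_-, n)$, no ``clumping'' ($\tau \to 0$) can be assumed for a general $\Lambda$, and in the worst case $\tau \sim (n-n_-)/2$ the above lower bound on the denominator is weakest. Arranging the estimates so as to remain effective in this regime, while characterising $S'$ purely by arithmetic conditions on $n$ (rapid growth of $r_2(n)$ and $r_2(n_-)$, moderate growth of $n - n_-$, and a quantitative form of angular equidistribution of lattice points on the circle $|\xi|^2 = n$ that rules out anomalous coincidences $|\xi - \zeta|^2 \approx \lambda_n$), is the main technical point of the argument.
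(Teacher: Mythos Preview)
The paper does not give a self-contained proof of this theorem; it is quoted from \cite{RU}, and the Remark following the statement sketches that argument and explains why it is insensitive to the choice of interlacing sequence. The \cite{RU} approach is organised differently from yours: one first truncates $G_\lambda$ to the annulus $A(\lambda,L)=\{\xi:\,||\xi|^2-\lambda|\le L\}$ with $L=\lambda^\delta$, and the density-one subsequence $S'$ is chosen so that lattice points in $A(\lambda,L)$ are \emph{well-spaced}. This forces $\xi-\zeta\notin A(\lambda,L)$ whenever $\xi\in A(\lambda,L)$ and $\zeta$ is fixed, so $|c(\xi-\zeta)|\ll 1/L$ uniformly and the numerator is killed in one stroke, without any resonant/off-resonant decomposition. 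A separate gap condition on $S'$ (neighbours not too far apart) gives the lower bound $\|G_\lambda\|_2\gg\lambda^{-o(1)}$.

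Your resonant estimate via the two-circle intersection bound is correct and is a genuinely different mechanism: you bound the \emph{number} of $\xi$ with $|\xi|^2=|\xi-\zeta|^2=n^*$ by $2$, and then divide by $r_2(n^*)\to\infty$, whereas \cite{RU} arranges $S'$ so that no such $\xi$ exist at all. This buys you a diagonal bound using only the normal order of $r_2$. However, the off-resonance and half-resonance parts of your plan are only sketched, and the two-circle bound alone does not close them: for $m=n^*$, $m'\neq n^*$ you must still rule out that many of the $r_2(n^*)$ shifted points $\xi-\zeta$ have norm landing close to $\lambda_n$, and this is exactly the well-spacing input of \cite{RU} (what you call ``angular equidistribution \ldots\ that rules out anomalous coincidences''). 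So the two routes ultimately rest on the same arithmetic condition on $S'$; your decomposition reorganises the bookkeeping and isolates a clean diagonal term, but does not bypass the well-spacing hypothesis. Note also that ``Landau's asymptotic count in a short interval'' is not quite what you need here---Landau gives the global count $|S(x)|$, and short-interval control is a separate (and more delicate) input.
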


\begin{remark}
  The above result is only stated for the weak coupling
  quantization in \cite{RU}.  However, the proof in fact works for any
  interlacing sequence, in particular for the strong coupling
  quantization.

To see this, we briefly recall the key steps of the proof
in \cite{RU}. 
The first step is to show that the Green's functions $G_\lambda$ can
be approximated by truncated Green's functions $G_{\lambda,L}$, where
$L=\lambda^\delta$ for a specific choice of $\delta>0$ and the
truncation drops all lattice vectors $\xi$ outside an
annulus $A(\lambda,L)=\{\xi\in\Z^2 \mid ||\xi|^2-\lambda|\leq L\}.$
The subsequence $S'\subset S$ is chosen in such a way to ensure that
the lattice points inside the annulus $A(\lambda,L)$ are sufficiently
well-spaced; this then implies that $c(\xi-\zeta) \ll 1/L$ for $\zeta
\in \Z^2$ fixed and $\xi \in A(\lambda,L)$.
A second condition requires that the neighboring Laplace
eigenvalues are not too far apart in order for the lower bound
$\fnorm{G_{\lambda}}_{2} \gg 1/\lambda^{o(1)}$ to hold.
These two key properties only depend on the arithmetic properties of
the neighboring Laplace eigenvalues, and not on the location of the
new eigenvalue itself.
  
\end{remark}

\subsection{Pure momentum modes}
Let us consider the case $\zeta=0$. We rewrite the matrix elements
as (cf. eq. \eqref{pseudo}) 
\begin{equation}\label{pure momentum matrix element}
\begin{split}
\left\langle
  \Op(e_{0,k})g_\lambda,g_\lambda\right\rangle=&\frac{\sum_{\xi\in\Z^2\setminus\{0\}}
  (\tilde{\xi}/|\tilde{\xi}|)^k|c(\xi)|^2+|c(0)|^2}{\sum_{\xi\in\Z^2}|c(\xi)|^2}\\ 
=&\frac{\frac{1}{\lambda^2}+\sum_{n\in S\setminus\{0\}}\frac{w_k(n)}{(n-\lambda)^2}}{\frac{1}{\lambda^2}+\sum_{n\in S\setminus\{0\}}\frac{r_2(n)}{(n-\lambda)^2}}
\end{split}
\end{equation}
where $w_{k}(n)$, for $n\in S$, is a certain exponential sum defined
as follows: with
$$
\Lambda_{n} := \{ z = x+iy \in \Z[i] : |z|^{2} = n  \},
$$
denoting the set Gaussian integers of norm $n$ (we can interpret these as
lattice points lying on a circle of radius $\sqrt{n}$), we define
\begin{equation}
w_k(n):= 
\sum_{z \in \Lambda_n } \left(\frac{z}{|z|}\right)^k. 
\end{equation}

\section{Pure momentum observables on the square torus}\label{PureModes}
\label{sec:pure-moment-observ}

We begin by introducing some convenient notation.
%\subsection{Notation}
Given a set $S \subset \Z$, let $S(x) := S \cap [1,x]$.  We say that a
subset $S_{1} \subset S$ is of {\em full density} if
$|S_{1}(x)| = (1+o(1))\cdot |S(x)|$ as $x \to \infty$.
In what follows, $S$ will always denote the set of integers that can
be represented as sums of two integer squares.

% Given an integer $n \geq 0$, recall that $r_{2}(n)$ denote the number
% of ways that $n$ can be written as a sum of two integers squares.
% % i.e.,
% % $$
% % r_{2}(n) = |\{ (x,y) \in \Z^2 : x^{2} +y^{2} = n \}|.
% % $$
% Moreover, given $n$ such that $r_{2}(n)>0$, let
% $$
% \Lambda_{n} := \{ z = x+iy \in \Z[i] : |z|^{2} = n  \},
% $$
% and for $k \in \Z$, define a Weyl type exponential sum
% $$
% w_{k}(n) := \sum_{z \in \Lambda_{n}} (z/|z|)^{k}.
% $$

% Since $w_{k}(n)=0$ for
% all $n>0$ if $4 \nmid k$ by the previously mentioned symmetry reason,
% we assume that $k$ is a 
% nonzero integer such that $4|k$.  

For $k \neq 0$, we can now construct a full density subsequence
$S'_k\subset S$ such that $\left\langle
  \Op(e_{0,k})g_{\lambda_n},g_{\lambda_n}\right\rangle\to0$ as
$\lambda_n\to\infty$ along $n\in S'_k$.  (Recall that $\lambda_n$
denotes the perturbed eigenvalue associated with the Laplace
eigenvalue $n\in S$.)
\begin{prop}\label{momentumbound}
For a given integer $k\neq0$, there exists a subsequence
$S'_k\subset S$, of full density, such that for $n\in S'_k$ 
\begin{equation}
|\left\langle \Op(e_{0,k})g_{\lambda_n},g_{\lambda_n}\right\rangle| \ll (\log\lambda_n)^{1/4-\log 2/2+o(1)}.
\end{equation}
\end{prop}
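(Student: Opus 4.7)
The strategy is to truncate the sums in \eqref{pure momentum matrix element} to an energy window of size $L=\lambda^\delta$ around $\lambda$, to bound the denominator below via the lower bound on $\|G_\lambda\|_2$ recalled in the remark following Theorem~\ref{mixed}, and then to control the truncated numerator using a pointwise estimate on the ratio $|w_k(n)|/r_2(n)$ valid for $n$ in a further full-density subset of $S$.

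Since $\Lambda_n$ is invariant under multiplication by $i$, we have $w_k(n)\equiv 0$ whenever $4\nmid k$; in this case the numerator of \eqref{pure momentum matrix element} equals $1/\lambda^2$, and the matrix element is $\ll\lambda^{-2+o(1)}$ on the good density-one subset, which is much stronger than required. We therefore assume $4\mid k$, in which case $w_k$ is, up to a unit factor, a multiplicative function of $n\in S$. The truncation itself follows the pattern of the proof of Theorem~\ref{mixed}: intersect with the full-density subsequence from \cite{RU} on which the lattice points in $A(\lambda,L)$ are well-spaced and $\|G_\lambda\|_2^2\gg\lambda^{-o(1)}$. The contribution to \eqref{pure momentum matrix element} of Fourier modes outside the annulus is then negligible, reducing matters to the ratio
\[
\frac{\sum_{n\in S\cap[\lambda-L,\lambda+L]} w_k(n)/(n-\lambda)^2}{\sum_{n\in S\cap[\lambda-L,\lambda+L]} r_2(n)/(n-\lambda)^2},
\]
and in view of the denominator lower bound it suffices to control $|w_k(n)|/r_2(n)$ pointwise for each $n$ in the window.

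The arithmetic heart of the argument is the pointwise bound
\[
|w_k(n)|/r_2(n) \;\ll\; (\log n)^{1/4-\log 2/2+o(1)}
\]
valid for all $n$ in a density-one subset $S'_k\subset S$. Factoring $n=\prod_i p_i^{a_i}\prod_j q_j^{2b_j}$ with $p_i\equiv1$, $q_j\equiv3\pmod 4$ and fixing Gaussian primes $\pi_{p_i}$ of norm $p_i$ with angles $\theta_{p_i}=\arg\pi_{p_i}$, multiplicativity reduces $|w_k(n)|/r_2(n)$ to a product over the split prime divisors of a normalized geometric sum bounded by $\min\bigl(1,\,(a_i+1)^{-1}|\sin(k\theta_{p_i})|^{-1}\bigr)$. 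Taking logarithms converts this product into an additive function of $n$; by Hecke's equidistribution theorem for angles of Gaussian primes, the mean of $\log|\sin(k\theta_p)|$ over $p\equiv1\pmod 4$ is $-\log 2$. Combined with the Erd\H{o}s--Kac statement $\omega_1(n)=(1/2+o(1))\log\log n$ for a density-one subset of $S$ (where $\omega_1$ counts split prime divisors), a Tur\'an--Kubilius-style variance estimate for this additive function yields the claimed bound.

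The main obstacle is precisely this last quantitative step: the exponent $-\log 2/2$ captures the ``Gaussian'' mean-value content $(\log 2)\cdot\omega_1(n)$, while the $+1/4$ slack is required to absorb the variance in the Erd\H{o}s--Kac approximation and the rare prime powers $p_i^{a_i}$ for which $|\sin(k\theta_{p_i})|$ is too small to beat the trivial bound $1$. Care must also be taken that the exceptional set where any of these steps fails has density zero uniformly enough to be combined with the good subsequence from Theorem~\ref{mixed}. Once the pointwise bound is established, substituting it into the truncated ratio and invoking $\|G_\lambda\|_2^2\gg\lambda^{-o(1)}$ completes the proof.
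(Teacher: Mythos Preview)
Your reduction to a pointwise bound on $|w_k(n)|/r_2(n)$ has a genuine gap. The truncated ratio is bounded by $\max_n |w_k(n)|/r_2(n)$ only if the maximum runs over \emph{all} $n\in S$ in the window, not merely over a density-one subset. The window $[\lambda-L,\lambda+L]$ with $L=\lambda^\delta$ contains $\asymp L/\sqrt{\log\lambda}$ elements of $S$, and nothing prevents one of the exceptional $n$ (where your Tur\'an--Kubilius bound fails) from being among the one or two nearest neighbours of $\lambda_m$---precisely the terms carrying almost all the weight $1/(n-\lambda)^2$. Since $|w_k(n)|\le r_2(n)$ always, a single such bad neighbour forces the ratio to be of order~$1$. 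Your closing remark that the exceptional set must have ``density zero uniformly enough to be combined with the good subsequence from Theorem~\ref{mixed}'' concerns the choice of $m$, not the $n$'s appearing in the sum, and does not address this.

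The paper's argument is organised quite differently. Rather than truncating to a polynomial window and seeking pointwise control of the ratio, it proves directly (Lemma~\ref{lem:claim-lemma}) that for $m$ in a carefully constructed full-density set $S_1$ the entire tail $\sum_{n\ne m,m_-}|w_k(n)|/(m-n)^2$ is $\ll(\log m)^{-3/4+o(1)}$; the key inputs are the $L^2$-bound $\sum_{n\le x}|w_k(n)|^2\ll x$ (Wirsing plus Hecke) and Rieger's pair-correlation bound for gaps in $S$, and the sum is split according to the \emph{size} of $|w_k(n)|$ rather than by distance. The matrix element is then governed by just the two terms $n\in\{m,m_-\}$, for which one separately arranges $|w_k(m)|,|w_k(m_-)|\le(\log m)^{1/4+o(1)}$ (Chebyshev from the $L^2$-bound) and $r_2(m),r_2(m_-)=(\log m)^{(\log 2)/2+o(1)}$ (Erd\H{o}s--Kac for $\omega_1$). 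This is the actual source of the exponent $1/4-(\log 2)/2$: it is $|w_k|$ and $r_2$ estimated \emph{separately} at two specific points, not a variance correction to the ratio. Indeed your own heuristic gives $|w_k(n)|/r_2(n)=(\log n)^{-(\log 2)/2+o(1)}$ for typical $n$, with the $O(\sqrt{\log\log n})$ fluctuations absorbed into the $o(1)$; the $+1/4$ you introduce is not needed for that pointwise statement and does not arise from it.
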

\noindent
We note that $1/4-\log 2/2 = -0.09657\cdots < 0$.

\subsection{Preliminary Results}
\label{sec:preliminary-results}
Before we can give the proof of Proposition \ref{momentumbound} we
state a number of necessary results whose proof can be found in the
number theory literature, or in Section~\ref{sec:numb-theor-backgr}.
We first recall Rieger's bound on
pair correlation type sums for integers that are sums of two squares.
%%  \begin{thm}
%%  \label{thm:pair-corr-bound}
%%    Let $f(n)$ denote the characteristic function of $S$, the set
%%    integers representable as sums of two integer squares.  If $k \neq
%%    0$ and $|k| \ll x$, then
%%  $$
%%  \sum_{n \leq x} f(n) f(n+k) \ll \frac{c(k)x }{\log x}
%%  $$
%%  for some $c(k)$ satisfying $c(k) \ll k/\phi(k)$.  In particular, the
%%  bound
%%  \begin{equation}
%%    \label{eq:ck-bounded-on-average}
%%  \sum_{0<|k| \leq T} c(k) \ll T.
%%  \end{equation}
%%  holds.\fixme{Rieger needs k>0.}
%%  \end{thm}

\begin{thm}[\cite{rieger-sums-of-square-twins}, Satz~2]
\label{thm:rieger-satz-2}  
\label{thm:pair-corr-bound}
Let $f(n)$ denote the characteristic function of $S$, the set integers
representable as sums of two integer squares.  %If $k > 0$ then
If $0 < |h| \ll x$ then
$$
\sum_{n \leq x} f(n) f(n+h) \ll \frac{c(h)x }{\log x}
$$
where
$
c(h) := \prod_{\substack{ p|h \\ p \equiv 3 \mod 4} } (1 + 1/p).
$
\end{thm}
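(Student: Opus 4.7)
The plan is to apply Selberg's upper bound sieve to this two-dimensional sieve problem. The starting point is that $n$ is a sum of two squares if and only if $v_p(n)$ is even for every prime $p \equiv 3 \pmod 4$. A necessary consequence is therefore that $n \pmod{p^2}$ avoids the set $F_p := \{p, 2p, \ldots, (p-1)p\} \subset \Z/p^2\Z$ of residues with $v_p$ exactly equal to $1$, and using this weaker condition as a sifting criterion produces a valid majorant for $f(n)f(n+h)$.

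For each prime $p \equiv 3 \pmod 4$ with $p \leq z$, the combined forbidden residue set modulo $p^2$ for the pair $(n, n+h)$ is $\Omega_p := F_p \cup (F_p - h)$. I would carry out the case split: if $p \nmid h$, then $F_p$ consists of nonzero multiples of $p$ modulo $p^2$ while $F_p - h$ consists of residues that are not multiples of $p$, so the two sets are disjoint and $|\Omega_p| = 2(p-1)$; if $p \mid h$, both sets lie inside the $p$-element subgroup of multiples of $p$ modulo $p^2$, so $|\Omega_p| \leq p$.

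Selberg's sieve (in the slightly generalized form that allows sifting modulo $p^2$ rather than $p$) then yields a bound of the shape $\sum_{n \leq x} f(n) f(n+h) \ll x/G(z) + \text{error}$, with
\[
G(z) \asymp \prod_{\substack{p \equiv 3 \pmod 4 \\ p \leq z}} \left(1 - \frac{|\Omega_p|}{p^2}\right)^{-1}.
\]
Using Mertens' theorem in arithmetic progressions, $\sum_{p \equiv 3 \pmod 4,\; p \leq z} 1/p = \tfrac12 \log\log z + O(1)$, the contribution from primes $p \nmid h$ gives the main factor $\prod_{p \leq z, \, p \equiv 3}(1-2/p)^{-1} \asymp \log z$, while the primes $p \mid h$ contribute the correction
\[
\prod_{\substack{p \mid h \\ p \equiv 3 \pmod 4}} \frac{1 - 2(p-1)/p^2}{1 - 1/p + O(1/p^2)} \asymp \prod_{\substack{p \mid h \\ p \equiv 3 \pmod 4}}\!\left(1 + \frac{1}{p}\right) = c(h).
\]

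Choosing $z$ to be a small positive power of $x$ (e.g. $z = x^{1/10}$) keeps the total sifting modulus $\prod_{p \leq z,\, p \equiv 3} p^{2}$ well below $x$ and makes the Selberg error term $o(x/\log x)$ uniformly for shifts $|h| \ll x$. Combined with the main term estimate above this produces the desired bound $\sum_{n \leq x} f(n) f(n+h) \ll c(h)\, x/\log x$. The principal obstacle is the bookkeeping of local densities and, more importantly, verifying uniformity in $h$: one needs the Selberg error term to remain admissible across the full range $|h| \ll x$, and one needs to argue that sifting modulo $p^2$ (rather than $p$) only costs a bounded multiplicative constant in the sieve machinery, both of which are standard but slightly technical adjustments to the one-dimensional template.
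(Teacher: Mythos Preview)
The paper does not prove this statement; it is quoted as Satz~2 of Rieger and used as a black box throughout Section~\ref{sec:pure-moment-observ}. Your Selberg-sieve outline is essentially the way such a bound is obtained (and is in the spirit of Rieger's own argument), and your local-density computation --- the case split on $p\mid h$ versus $p\nmid h$ producing the correction factor $c(h)$ --- is correct.

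There is, however, one concrete misstatement that you should fix. You claim that taking $z=x^{1/10}$ ``keeps the total sifting modulus $\prod_{p\le z,\,p\equiv 3}p^{2}$ well below $x$'', but in fact
\[
\prod_{\substack{p\le z\\ p\equiv 3\ (4)}} p^{2}=\exp\bigl(2\,\theta(z;4,3)\bigr)\asymp \exp(x^{1/10}),
\]
which is far larger than $x$. Fortunately the Selberg sieve never requires this. What controls the error is the \emph{support level} $D$ of the weights $\lambda_d$: parametrising by squarefree $d$ built from primes $p\le z$, $p\equiv 3\pmod 4$, the true modulus attached to $d$ is $d^{2}$ and the remainder obeys $|r_d|\le\prod_{p\mid d}|\Omega_p|\le 2^{\omega(d)}d$, so the Selberg error term is $\ll D^{4}(\log D)^{O(1)}$. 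Any choice such as $D=z=x^{1/5}$ then makes the error $o(x/\log x)$, uniformly in $|h|\ll x$. Since the sieve problem has dimension~$1$ (because $\sum_{p\le z,\,p\equiv 3}|\Omega_p|/p^{2}\sim\log\log z$), one still has $G(z,D)\gg\prod_{p\le z}(1-|\Omega_p|/p^{2})^{-1}$ once $D\ge z$, and your product evaluation finishes the job. So the plan is sound, but that sentence as written is wrong and should be replaced by a correct choice of sieve level.
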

\begin{remark}
  Rieger's result is stated for $h>0$ and summing over $n \leq x+h$,
  but since $f(n) = 0$ for $n < 0$ and we assume $|h|\ll x$, the above
  formulation follows immediately (albeit possibly with a worse
  absolute constant.)  
Moreover, since $c(h) \leq \sum_{d|h} 1/d$, it easily follows that
\begin{equation}
  \label{eq:ck-bounded-on-average}
\sum_{0<|h| \leq T} c(h)  \ll T, \quad \text{ $T \to \infty$}.
\end{equation}
\end{remark}
We shall also need to recall a fundamental fact about the size of
$S(x)$.
\begin{thm}[Landau, see
  \cite{Landau2}, \S183.]
\label{thm:landau}
There exists $c>0$ such that 
\be
|S(x)| = \frac{c \cdot x}{\sqrt{\log x}} (1+O( 1/\log x ))
\ee
as $x \to \infty$.
\end{thm}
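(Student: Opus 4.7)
The plan is to prove Landau's theorem via the generating Dirichlet series of $S$ together with the Selberg--Delange method (equivalently, a Hankel contour computation).

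First, I would exploit the classical characterization that $n \in S$ iff every prime $p \equiv 3 \pmod 4$ divides $n$ to an even power, to write, for $\mathrm{Re}(s) > 1$,
$$B(s) := \sum_{n \in S} n^{-s} = \frac{1}{1-2^{-s}} \prod_{p \equiv 1 \pmod 4} \frac{1}{1-p^{-s}} \prod_{p \equiv 3 \pmod 4} \frac{1}{1-p^{-2s}}.$$
A direct term-by-term comparison with $\zeta(s) L(s, \chi_4)$, where $\chi_4$ is the nontrivial character modulo $4$, yields
$$B(s)^2 = \zeta(s)\, L(s, \chi_4)\, G(s), \qquad G(s) := \frac{1}{1-2^{-s}} \prod_{p \equiv 3 \pmod 4} \frac{1}{1-p^{-2s}},$$
with $G$ holomorphic and non-vanishing on $\mathrm{Re}(s) > 1/2$.

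Second, since $\zeta$ has a simple pole at $s=1$ and $L(1, \chi_4) = \pi/4 \neq 0$, the product $(s-1)\zeta(s)L(s,\chi_4)G(s)$ is holomorphic and non-vanishing near $s=1$. Choosing the principal branch of the square root, I obtain
$$B(s) = \frac{A(s)}{\sqrt{s-1}},$$
where $A$ is holomorphic and non-vanishing in a neighborhood of $s = 1$ and, using the classical zero-free regions for $\zeta$ and $L(\cdot, \chi_4)$, extends analytically to a set of the form $\{\mathrm{Re}(s) \geq 1 - \kappa/\log(|t|+2)\}$, with $A(1)^2 = (\pi/4)\, G(1)$.

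Third, I would apply Perron's formula
$$|S(x)| = \frac{1}{2\pi i} \int_{\sigma_0 - iT}^{\sigma_0 + iT} B(s) \frac{x^s}{s}\, ds + E(x, T), \qquad \sigma_0 := 1 + \frac{1}{\log x},$$
and deform the contour into a Hankel contour $\mathcal{H}$ wrapping around the branch point at $s = 1$. The rescaling $s = 1 + u/\log x$ combined with Hankel's formula $\frac{1}{2\pi i}\int_{\mathcal{H}'} e^u u^{-1/2}\, du = 1/\Gamma(1/2) = 1/\sqrt{\pi}$ gives
$$\frac{1}{2\pi i} \int_{\mathcal{H}} \frac{x^s}{s \sqrt{s-1}}\, ds = \frac{x}{\sqrt{\pi \log x}}\,\bigl(1 + O(1/\log x)\bigr),$$
and combining with the Taylor expansion of $A$ at $s=1$ yields $|S(x)| = c\, x/\sqrt{\log x}\cdot (1 + O(1/\log x))$ with $c = A(1)/\sqrt{\pi} = \frac{1}{\sqrt{2}} \prod_{p \equiv 3 \pmod 4}(1 - p^{-2})^{-1/2}$ (the Landau--Ramanujan constant).

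The main obstacle will be rigorously controlling the contour deformation: this requires polynomial-in-$|t|$ upper bounds for $B(s)$ on vertical and horizontal segments positioned slightly to the left of $\mathrm{Re}(s) = 1$, which through the factorization $B = A/\sqrt{s-1}$ reduce to classical convexity bounds and the Hadamard--de la Vall\'ee Poussin zero-free region for $\zeta$ and $L(\cdot, \chi_4)$. Choosing $T$ as a suitable power of $\log x$ balances the tail-of-Perron error against the horizontal contour error, and the relative error $O(1/\log x)$ then emerges from the next-order term in the local expansion of $A$ at $s = 1$.
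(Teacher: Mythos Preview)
Your proposal is a correct and essentially complete sketch of the standard Selberg--Delange proof of Landau's theorem; the factorization $B(s)^2 = \zeta(s)L(s,\chi_4)G(s)$, the value of the Landau--Ramanujan constant, and the Hankel contour mechanism are all accurate.

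Note, however, that the paper does not actually prove this statement: Theorem~\ref{thm:landau} is simply quoted as a classical result from Landau's \emph{Handbuch}, with no proof given. So there is no ``paper's own proof'' to compare against --- you have supplied what the paper omits. One minor remark on your write-up: choosing $T$ as a power of $\log x$ does work, but you should check that the resulting Perron remainder $\ll x(\log x)/T$ and the horizontal-segment contribution (controlled via the zero-free region $\sigma \geq 1 - \kappa/\log T$, giving a saving of $\exp(-\kappa \log x/\log\log x)$) are both $o\bigl(x/(\log x)^{3/2}\bigr)$; this is routine but worth stating explicitly since the claimed error term is the relatively sharp $O(1/\log x)$ rather than merely $o(1)$.
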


Given $n \in S$, let $\omega_1(n)$ denote the number of prime divisors
of $n$ that are congruent to one modulo four, i.e.,
$$
\omega_1(n) := \sum_{p |n, p \equiv 1 \mod 4} 1
$$
We shall use Erd\"os-Kac type techniques to prove (see
Section~\ref{Erdos Kac}) the following structure result about the
factorizations of ``typical'' integers in the set $S$.
\begin{prop}
\label{prop:erdos-kac-moment-estimates}
We have
\be
\frac{1}{|S(x)|}  \sum_{n \in S(x)} \omega_{1}(n) 
= \frac{1}{2} \log \log x + O(\log \log \log x)
\ee
and
\be
\frac{1}{|S(x)|}  \sum_{n \in S(x)} \omega_{1}(n)^{2}
= \frac{1}{4} (\log \log x)^{2} +
O( (\log \log x) \cdot \log \log \log x  )
\ee
\end{prop}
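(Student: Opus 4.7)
The approach is to expand $\omega_{1}(n) = \sum_{p\mid n,\, p\equiv 1\mod 4} 1$ and interchange orders of summation. The crucial arithmetic input is that for a prime $p \equiv 1 \mod 4$ we have $pm \in S \iff m \in S$, since the characterization of $S$ as sums of two squares constrains only the exponents of primes $\equiv 3 \mod 4$. Writing $\chi$ for the indicator function of $S$, this yields
$$\sum_{n \in S(x)} \omega_{1}(n) \;=\; \sum_{\substack{p \leq x \\ p \equiv 1 \mod 4}} \sum_{m \leq x/p} \chi(pm) \;=\; \sum_{\substack{p \leq x \\ p \equiv 1 \mod 4}} |S(x/p)|.$$

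To evaluate this, I would split at $p \leq x^{1/2}$. For such $p$, Theorem~\ref{thm:landau} applies uniformly, giving $|S(x/p)| = (c\,x/p)/\sqrt{\log(x/p)} \cdot (1 + O(1/\log x))$; the Taylor expansion $(\log(x/p))^{-1/2} = (\log x)^{-1/2}(1 + O(\log p/\log x))$ pulls the $\sqrt{\log x}$ factor outside, reducing the main term to $(cx/\sqrt{\log x})\sum_{p \leq x^{1/2},\,p \equiv 1 \mod 4} 1/p$. By Mertens' theorem for primes in arithmetic progressions, $\sum_{p \leq y,\,p \equiv 1 \mod 4} 1/p = \tfrac{1}{2}\log\log y + O(1)$, and dividing by $|S(x)|$ (again by Theorem~\ref{thm:landau}) produces $\tfrac{1}{2}\log\log x + O(1)$, well within the claimed error. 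The tail $p > x^{1/2}$ is handled by a dyadic decomposition $p \in [x/2^{k+1}, x/2^{k}]$: the prime number theorem gives $\ll(x/2^{k})/\log x$ primes in this range, and the uniform bound $|S(y)| \ll y/\sqrt{\log(y+2)}$ yields $|S(x/p)| \ll 2^{k}/\sqrt{k+1}$, so the total tail contribution is $O(x/\sqrt{\log x}) = O(|S(x)|)$.

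For the second moment, I would expand
$$\omega_{1}(n)^{2} = \omega_{1}(n) + \#\{(p_{1},p_{2}) : p_{1} \neq p_{2},\ p_{1} p_{2} \mid n,\ p_{i} \equiv 1 \mod 4\},$$
with the pair sum ranging over \emph{ordered} distinct primes. The diagonal contributes $O((\log\log x)|S(x)|)$ via the first-moment result. For the pair sum, the same reduction gives $|S(x/(p_{1}p_{2}))|$ for each ordered pair. Restricting to $p_{1},p_{2} \leq x^{1/4}$ so that Landau applies uniformly, and using the identity
$$\sum_{\substack{p_{1} \neq p_{2} \\ p_{i} \equiv 1 \mod 4}} \frac{1}{p_{1} p_{2}} \;=\; \Big(\sum_{p \equiv 1 \mod 4} \frac{1}{p}\Big)^{\!2} - \sum_{p \equiv 1 \mod 4} \frac{1}{p^{2}}$$
together with Mertens, produces the main term $\tfrac{1}{4}(\log\log x)^{2}|S(x)|$. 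The complementary range where $\max(p_{1},p_{2}) > x^{1/4}$ is controlled by fixing the large prime $p_1$ and observing that $\sum_{p_2 \equiv 1 \mod 4} |S((x/p_1)/p_2)| = \sum_{m \in S(x/p_1)} \omega_1(m) \ll (\log\log x) \cdot |S(x/p_1)|$ by the first moment, followed by the same dyadic sum in $p_1$; the total tail is thus $O((\log\log x)|S(x)|)$.

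The main obstacle is maintaining uniformity in Landau's estimate as $x/p$ (respectively $x/(p_{1}p_{2})$) shrinks: the leading asymptotic with a $(1+O(1/\log x))$ relative error is only reliable when $p$ is not too close to $x$. The dyadic decomposition of the tail absorbs this degradation at the cost of errors that are comfortably inside the $O(\log\log\log x)$ and $O((\log\log x)\log\log\log x)$ bounds stated in the proposition.
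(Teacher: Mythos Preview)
Your proof is correct and follows the same overall strategy as the paper: expand $\omega_1(n)$ as a sum over primes $p\equiv 1\bmod 4$, interchange summation to reduce to $\sum_p |S(x/p)|$ (using that $pm\in S\iff m\in S$ for such $p$), then combine Landau's asymptotic with Mertens' theorem for arithmetic progressions. The technical execution differs slightly. The paper truncates the prime range at $x^{1/4}$ via the ``at most four large prime factors'' trick and then further splits at $x^{1/\log\log x}$ so that $\log(x/p)=\log x\,(1+O(1/\log\log x))$ on the small range; this second split is what produces the $\log\log\log x$ in the stated error. Your direct Taylor expansion $(\log(x/p))^{-1/2}=(\log x)^{-1/2}(1+O(\log p/\log x))$ handles all of $p\le x^{1/2}$ at once, and since $\sum_{p\le x^{1/2}}\log p/p\ll\log x$ the resulting correction is $O(|S(x)|)$, actually giving a sharper $O(1)$ error in the first moment. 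For the second moment, your device of bounding the tail $\max(p_1,p_2)>x^{1/4}$ by fixing the large prime and invoking the first-moment bound at scale $x/p_1$ is a clean shortcut compared to the paper's direct estimation of the double sum; both yield errors well within the stated $O((\log\log x)\log\log\log x)$.
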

This, together with Chebychev's inequality, immediately gives the
following normal order result on $\omega_{1}(n)$.
\begin{cor} 
\label{cor:r2-log-normal-order}
Fix $\epsilon>0$.  Then, as $x \to \infty$, 
\be
| \{n \in S(x) : |\omega_{1}(n) - \frac{1}{2} \log \log n| <
 (\log \log n )^{1/2+\epsilon} \}|=
|S(x)| \cdot (1+o_{\epsilon}(1)).
\ee
\end{cor}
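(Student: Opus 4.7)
\medskip

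\noindent
\textbf{Proof plan.} The strategy is the standard Erd\H{o}s--Kac-style deduction of a normal order from moment estimates via Chebychev's inequality. I would first extract the variance of $\omega_{1}$ on $S(x)$ from Proposition~\ref{prop:erdos-kac-moment-estimates}: subtracting the square of the mean from the second moment gives
\begin{equation*}
\frac{1}{|S(x)|} \sum_{n \in S(x)} \left( \omega_{1}(n) - \tfrac{1}{2}\log\log x \right)^{2}
= O\bigl( (\log\log x) \cdot \log\log\log x \bigr),
\end{equation*}
where the two leading $\tfrac{1}{4}(\log\log x)^{2}$ terms cancel and the cross terms contribute at worst $O((\log\log x)\log\log\log x)$ using the size of the error in the first moment.

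Next I would apply Chebychev's inequality with threshold $T = \tfrac{1}{2}(\log\log x)^{1/2+\epsilon}$, which gives
\begin{equation*}
\bigl| \{ n \in S(x) : |\omega_{1}(n) - \tfrac{1}{2}\log\log x | \geq T \} \bigr|
\ll |S(x)| \cdot \frac{(\log\log x)\log\log\log x}{(\log\log x)^{1+2\epsilon}}
= o_{\epsilon}(|S(x)|),
\end{equation*}
so a $(1 + o_{\epsilon}(1))$-proportion of $n \in S(x)$ satisfies $|\omega_{1}(n) - \tfrac{1}{2}\log\log x| < \tfrac{1}{2}(\log\log x)^{1/2+\epsilon}$.

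Finally, I would convert the centering from $\log\log x$ to $\log\log n$. By Landau's theorem (Theorem~\ref{thm:landau}), only $O(x^{1/2}/\sqrt{\log x}) = o(|S(x)|)$ elements of $S(x)$ lie below $x^{1/2}$; for all remaining $n$ one has $\log\log n = \log\log x + O(1)$, and similarly $(\log\log n)^{1/2+\epsilon} = (\log\log x)^{1/2+\epsilon}(1 + o(1))$. Hence the $O(1)$ discrepancy between $\tfrac{1}{2}\log\log n$ and $\tfrac{1}{2}\log\log x$ is absorbed in the slack between $\tfrac{1}{2}(\log\log x)^{1/2+\epsilon}$ and $(\log\log n)^{1/2+\epsilon}$ for $x$ sufficiently large, yielding the claim.

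There is no real obstacle: the only mild point to keep track of is this last bookkeeping step, where the additive $O(\log\log\log x)$ error in the mean from Proposition~\ref{prop:erdos-kac-moment-estimates} and the $O(1)$ difference between $\log\log n$ and $\log\log x$ must both be dominated by the gap between the chosen Chebychev threshold $\tfrac{1}{2}(\log\log x)^{1/2+\epsilon}$ and the target threshold $(\log\log n)^{1/2+\epsilon}$, which they are.
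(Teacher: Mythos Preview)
Your proposal is correct and matches the paper's approach exactly: the paper simply states that the corollary follows from Proposition~\ref{prop:erdos-kac-moment-estimates} together with Chebychev's inequality, and you have supplied precisely the details of that deduction (variance computation, Chebychev threshold, and the routine passage from $\log\log x$ to $\log\log n$).
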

From the corollary, we deduce (see Section~\ref{Erdos Kac} for
details) the following weak analog of a normal order result for
$r_{2}(n)$.
\begin{cor}
\label{cor:r2-normal-order}
As $x \to \infty$, 
\be
| \{n \in S(x) : r_{2}(n) = (\log n)^{(\log 2)/2 \pm o(1)} \}| =
|S(x)| \cdot (1+o(1)).
\ee
\end{cor}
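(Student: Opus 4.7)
The plan is to reduce the corollary to the normal order statement of Corollary~\ref{cor:r2-log-normal-order}. For $n \in S$, every prime $q \equiv 3 \mod 4$ divides $n$ to an even power, and the multiplicativity of $r_2/4 = d_1-d_3$ (the difference between the number of divisors $\equiv 1$ and $\equiv 3 \mod 4$) yields the explicit formula
$$
r_2(n) = 4 \prod_{\substack{p\mid n \\ p \equiv 1 \mod 4}} (v_p(n)+1),
$$
where $v_p(n)$ denotes the $p$-adic valuation. Taking logarithms and comparing with $(\log 2)\,\omega_1(n) = \sum_{p\mid n,\, p\equiv 1 \mod 4} \log 2$, the lower bound $\log r_2(n) \geq \log 4 + (\log 2)\,\omega_1(n)$ is immediate, and the nonnegative excess
$$
E(n) := \log r_2(n) - \log 4 - (\log 2)\,\omega_1(n) = \sum_{\substack{p \mid n,\ p\equiv 1 \mod 4 \\ v_p(n) \geq 2}} \log\!\left(\frac{v_p(n)+1}{2}\right)
$$
captures the discrepancy. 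The heart of the proof is to show that $E(n) = o(\log \log n)$ on a full density subset of $S$.

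I would obtain this by averaging $E(n)$ over $S(x)$ and invoking Markov's inequality. The crucial observation is that for a prime $p \equiv 1 \mod 4$ one has $p \in S$, so an integer divisible by $p^k$ lies in $S$ if and only if $n/p^k$ does; hence by Landau's Theorem~\ref{thm:landau},
$$
|\{n \in S(x) : p^k \mid n\}| = |S(x/p^k)| \ll \frac{x}{p^k \sqrt{\log x}}.
$$
Exchanging the order of summation,
$$
\sum_{n \in S(x)} E(n)
\leq \sum_{\substack{p \equiv 1 \mod 4 \\ k \geq 2}} \log(k+1)\, |S(x/p^k)|
\ll \frac{x}{\sqrt{\log x}} \sum_{p,\, k \geq 2} \frac{\log(k+1)}{p^k}
= O(|S(x)|),
$$
since the double sum over prime powers with exponent $\geq 2$ converges. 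Markov's inequality with any $T(x) \to \infty$ (e.g.\ $T(x) = \log \log \log x$) then shows $E(n) \leq T(x)$ off an exceptional set of size $O(|S(x)|/T(x)) = o(|S(x)|)$, so $E(n) = o(\log \log n)$ on a full density subset of $S$.

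Combining this upper bound with Corollary~\ref{cor:r2-log-normal-order}, which supplies $\omega_1(n) = \tfrac{1}{2}\log \log n + O((\log \log n)^{3/4}) = (\tfrac{1}{2} + o(1)) \log \log n$ on a full density subset, intersecting the two subsets gives $\log r_2(n) = ((\log 2)/2 + o(1))\, \log \log n$, which is precisely the claimed asymptotic $r_2(n) = (\log n)^{(\log 2)/2 \pm o(1)}$. The main subtlety, and the only place where restricting to $S$ matters, is the identity $|\{n \in S(x) : p^k \mid n\}| = |S(x/p^k)|$: because $|S(x)|/x \asymp 1/\sqrt{\log x}$, a naive bound $O(x)$ for $\sum_{n \leq x} E(n)$ would only yield an average of size $\sqrt{\log x}$, far too weak. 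It is exactly the condition $p \in S$ that saves the $1/\sqrt{\log x}$ factor and makes the averaging argument effective.
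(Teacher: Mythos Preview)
Your argument is correct and follows the same overall strategy as the paper: both reduce the normal order of $r_2(n)$ to that of $\omega_1(n)$ by showing that the ratio $r_2(n)/2^{\omega_1(n)}$ is typically small, and then invoke Corollary~\ref{cor:r2-log-normal-order}.

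The technical implementation differs, however. The paper averages the multiplicative function $f(n)=r_2(n)/(4\cdot 2^{\omega_1(n)})$ itself, bounding $\sum_{n\le x} f(n)\ll |S(x)|$ via Wirsing's Satz~1 and then applying Chebyshev with threshold $\log\log\log n$. You instead average the logarithm $E(n)=\log f(n)$ directly, using the structural observation that for $p\equiv 1\bmod 4$ the map $n\mapsto n/p^k$ is a bijection between $\{n\in S(x): p^k\mid n\}$ and $S(x/p^k)$, so Landau's Theorem~\ref{thm:landau} alone gives $\sum_{n\in S(x)}E(n)\ll |S(x)|$. Your route is more elementary---it avoids Wirsing entirely and makes transparent exactly where the factor $1/\sqrt{\log x}$ is recovered---while the paper's route is shorter to write down once Wirsing is already on the table (as it is in Section~\ref{Mean Values}). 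One small point worth making explicit in your write-up: the bound $|S(x/p^k)|\ll x/(p^k\sqrt{\log x})$ is only uniform for, say, $p^k\le\sqrt{x}$; the contribution from larger prime powers with $k\ge 2$ is easily seen to be $O(x^{3/4})$ and hence negligible.
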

We shall also need the following $L^{2}$-bound on the exponential sums
$w_{k}(n)$, 
\begin{prop}
\label{prop:w-k-l2-bound}
If $k \neq 0$, then
\begin{equation}
  \label{eq:1}
\sum_{n \in{ S(x)}}  |w_k(n)|^{2} \ll_{k}  x
\end{equation}
In particular, by Chebychev's inequality, the number of $n \in S(x)$
for which $|w_{k}(n)|>T$ is at most $x/T^{2}$, and we find that
$|w_{k}(n)| \leq (\log n)^{1/4+\epsilon}$ holds for almost all $n \in
S(x)$.
\end{prop}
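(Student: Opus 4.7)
The plan is to recognise $w_k(n)$ as the coefficient of an ``angular'' Hecke Gr\"ossencharakter on $\Z[\i]$ and then derive the $L^2$ bound from the analytic behaviour of the associated Dirichlet series $\sum_n |w_k(n)|^2 n^{-s}$ near $s = 1$.

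A preliminary reduction removes the case $4 \nmid k$: decomposing $\Lambda_n$ into orbits under the unit group $\{\pm 1, \pm \i\}$ of $\Z[\i]$, each orbit contributes a multiple of $1 + \i^k + (-1)^k + (-\i)^k$, which vanishes unless $4 \mid k$. Hence $w_k \equiv 0$ for $4 \nmid k$ and the proposition is trivial in that case. I therefore assume $k = 4\ell$ with $\ell \neq 0$. Then the prescription $\psi_k(\mathfrak a) := (z/|z|)^k$, for any generator $z$ of a principal ideal $\mathfrak a \subset \Z[\i]$, gives a well-defined non-trivial unitary Hecke Gr\"ossencharakter, and $\tilde w_k(n) := w_k(n)/4 = \sum_{N\mathfrak a = n} \psi_k(\mathfrak a)$ is multiplicative in $n$.

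Expanding $|w_k(n)|^2 = 16 \sum_{N\mathfrak a = N\mathfrak b = n} \psi_k(\mathfrak a) \overline{\psi_k(\mathfrak b)}$ and pulling out the greatest common ideal divisor $\mathfrak d = \gcd(\mathfrak a, \mathfrak b)$ produces the factorisation
$$\sum_n \frac{|w_k(n)|^2}{n^s} = 16\, \zeta_K(s) \cdot F_k(s), \qquad F_k(s) := \sum_{\substack{(\mathfrak a', \mathfrak b') = 1 \\ N\mathfrak a' = N\mathfrak b'}} \psi_k(\mathfrak a') \overline{\psi_k(\mathfrak b')} \, N\mathfrak a'^{-s},$$
where $\zeta_K$ is the Dedekind zeta function of $K = \Q(\i)$. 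A direct local computation shows that the Euler factor of $F_k$ at $p = 2$ and at each inert prime $p \equiv 3 \pmod 4$ equals $1$ (the coprimality and equal-norm conditions force both $\mathfrak a'$ and $\mathfrak b'$ to vanish at those primes), while at a split prime $p = \pi\bar\pi \equiv 1 \pmod 4$ with $\psi_k(\pi) = e^{\i k \theta_p}$ the local factor is $(1 - p^{-2s})/|1 - e^{2\i k \theta_p} p^{-s}|^2$. Comparing this Euler product with that of the Hecke $L$-function $L(s, \psi_{2k})$ yields the explicit identity
$$F_k(s) = \frac{L(s, \psi_{2k})}{\zeta(2s)} \cdot \frac{1 - \i^k / 2^s}{1 - 2^{-2s}},$$
exhibiting $F_k$ as analytic in a neighbourhood of $s = 1$: indeed $L(s, \psi_{2k})$ is entire because $\psi_{2k}$ is a non-trivial Hecke Gr\"ossencharakter, and $\zeta(2s)$ is analytic and non-vanishing at $s = 1$.

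Consequently $\sum_n |w_k(n)|^2 n^{-s}$ inherits precisely the simple pole of $\zeta_K(s)$ at $s = 1$, with residue bounded in terms of $k$, and a Wiener--Ikehara Tauberian argument then delivers $\sum_{n \leq x} |w_k(n)|^2 \ll_k x$. The main technical point is the Euler-product identification of $F_k$ with $L(s, \psi_{2k})/\zeta(2s)$ up to two explicit local corrections; this amounts to a careful bookkeeping of the Euler factors over the ramified, inert and split rational primes of $\Q(\i)$, and rests otherwise on the standard analytic theory of Hecke $L$-functions on imaginary quadratic fields.
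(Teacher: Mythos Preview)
Your argument is correct and in fact yields the sharper asymptotic $\sum_{n\leq x}|w_k(n)|^2 \sim C_k\,x$, not just the upper bound. The paper proceeds differently: it observes that $f(n):=(|w_k(n)|/4)^2$ is multiplicative, uses Hecke's angular equidistribution of split Gaussian primes to show $\sum_{p\leq x}f(p)\sim x/\log x$ (since $\int_0^1(2\cos 2\pi k\theta)^2\,d\theta=2$ and only half the primes are split), and then applies Wirsing's Satz~1 together with the estimate $\prod_{p\leq x}(1+f(p)/p+\cdots)\ll\log x$ to obtain $\sum_{n\leq x}f(n)\ll x$. Both routes ultimately rest on Hecke's theory of Gr\"ossencharakters on $\Q(\i)$: the paper uses it via the equidistribution of the angles $\theta_p$, whereas you invoke directly the entirety of $L(s,\psi_{2k})$ for $k\neq 0$. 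Your Dirichlet-series factorisation is cleaner and delivers a genuine asymptotic, at the cost of appealing to Wiener--Ikehara and the analytic continuation of Hecke $L$-functions; the paper's approach stays closer to elementary multiplicative number theory (Wirsing plus a prime-sum estimate) and gives only the upper bound, which is all that is needed downstream.
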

The result readily follows from a Halberstam-Richert type inequality,
see section \ref{Mean Values} for more details.

\subsection{Proof of Proposition \ref{momentumbound}}

We begin by noting that since the set $\Lambda_{n}$ is invariant under
multiplication by $i$, $w_{k}(n) = 0$ unless $4|k$.  Hence the case $k
\not \equiv 0 \mod 4$ is essentially trivial on recalling \eqref{pure
  momentum matrix element}.  Thus, in what follows we will always
assume that $4|k$, and $k \neq 0$.

We next introduce some further notation. 
Given $m \in S$, let $m_{+}, m_{-} \in S$ denote the nearest neighbor
(in $S$) to the right, respectively left, and similarly, let
$m_{++},m_{--}$ denote the second nearest neighbors to the right,
respectively left.

Define $S_{1} \subset S$ by successively removing a zero density
subset of elements for which the following properties do not
hold. Namely, let $S_{1}$ consist of those $m \in S$ for which the
following properties, as $m \to \infty$, all hold:
\begin{enumerate}
\item \label{item:mult-near-mean} Multiplicities are near their
  (logarithmic) normal order in the following sense:
$$r_{2}(m) = (\log  m)^{(\log 2)/2 \pm o(1)}, \quad
r_{2}(m_{-}) = (\log  m)^{(\log 2)/2 \pm o(1)}.
$$
\item  \label{item:almost-square-root-cancellation}
There is nearly square root cancellation in  exponential sums:
$$
|w_{k}(m)|   \leq (\log m)^{1/4+o(1)}, \quad  |w_{k}(m_{-})|
  \leq (\log m)^{1/4+o(1)}.
$$
\item  \label{item:no-near-nbrs}
There are no near neighbors: $m_{+}-m \geq (\log m)^{1/2-o(1)}$, and
  $m-m_{-} \geq (\log m)^{1/2-o(1)}$.
\item  \label{item:no-near-second-nbrs}
There are  no near second neighbors: $m_{++}-m_{+} \geq (\log m)^{1/2-o(1)}$, and
  $m_{-}-m_{--} \geq (\log m)^{1/2-o(1)}$.
\item \label{item:no-far-nbrs} Neighbors are not too far away:
  $m_{+}-m \leq (\log m)^{1/2+o(1)}$, $m-m_{-} \leq (\log
  m)^{1/2+o(1)}$, and $m_{-}-m_{--} \leq (\log m)^{1/2+o(1)}$.
\item \label{item:not-many-close-nbrs}
There are not too many ``close'' neighbors in the following sense: for 
$T \ll m$,
$$
|\{ n \in S: |n-m| \leq T \}| \ll \frac{T (\log T)^{2}}{(\log m)^{1/2-o(1)}}
$$

\item \label{item:remove-w-bad} For $W \in [(\log m)^{1/4} \cdot (\log \log
  m)^{2}, (\log m)^{2}]$ there are 
$$
\gg W^{2}/( (\log m)^{1/2} (\log \log m) (\log W)^{2})
$$ 
elements in $S$ that lie between $m$ and $n$ if $|w_{k}(n)| \geq W$.

\item \label{item:remove-w-really-bad} For $W \geq (\log m)^{2}$ there
  are
$
\gg  W^{3/2}/\log W
$ 
elements in $S$ that lie between $m$ and $n$ if $|w_{k}(n)| \geq W$.

\item \label{item:crucial-sum-bound} For $\epsilon>0$ and 
  $G \in [2, m^{1-\epsilon}]$,
$$
H_{G}(m) := \sum_{\substack{n \in S, n \neq m\\ |m-n| \geq G} }
\frac{1}{|m-n|^{2}}
\ll_{\epsilon} \frac{(\log G)^{2}}{G (\log m)^{1/2-o(1)}}.
$$

\end{enumerate}
\begin{remark}
We tacitly assume that $o(1)$ is chosen so that $(\log m)^{o(1)} \to
\infty$ as $m \in S$ tends to infinity; we will also use the
convention that the sign of $o(1)$ is important, in particular 
$(\log m)^{-o(1)} \to 0$.
\end{remark}

We defer the proof that $S_{1}$ has full  density inside $S$
to Section~\ref{sec:proof-that-s_1}.

\begin{remark}
\label{rem:same-size}
Here, and what follows we will without comment make use of the fact
that $\log m \sim \log m_{-} \sim \log m_{+} \sim \log m_{++}$ etc.
To see this, any crude bound on $|m_{+}-m|$, $|m_{++}-m|$ etc
suffices, e.g. the trivial bound $|m_{+}-m| \ll m^{1/2}$ which follows
from bounding the distance to the nearest square.
  Moreover, we 
  also use the fact that for almost all $m \in S(x)$, e.g. $m \in
  [x/\log x,x]$, we have $\log m \sim \log x$.
\end{remark}

The following will be used to show that the numerator in \eqref{pure
  momentum matrix element} 
is essentially given by two terms.
\begin{lem}
\label{lem:claim-lemma}
If $m \in S_{1}$, then
\be
%1/\lambda_m^{2} + 
\sum_{n \in S, n \neq m, m_{-}}
%\frac{|w_{k}(n)|}{|\lambda_{m}-n|^{2}}
\frac{|w_{k}(n)|}{|m-n|^{2}}
\ll
\frac{1}{(\log m)^{3/4-o(1)}}
\ee
\end{lem}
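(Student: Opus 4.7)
The plan is to split the sum according to the size of $|w_k(n)|$. Set $N := (\log m)^{1/4}(\log\log m)^{2}$, the threshold at which property \ref{item:remove-w-bad} becomes usable, and note that properties \ref{item:no-near-nbrs} and \ref{item:no-near-second-nbrs} force $|m-n| \geq G_0 := (\log m)^{1/2-o(1)}$ for every $n \in S \setminus \{m, m_-\}$. Very distant terms (with $|m-n| \gg m$) contribute $O(1/m^{1-o(1)})$ via the trivial bound $|w_k(n)| \leq r_2(n) \ll n^{o(1)}$ together with partial summation, so below I focus on $n$ with $|m-n| \leq m/2$.

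For the typical range $|w_k(n)| \leq N$, I would use $|w_k(n)| \leq N$ and apply property \ref{item:crucial-sum-bound} at $G = G_0$, giving a contribution at most
\[
N \cdot H_{G_0}(m) \ll N \cdot \frac{(\log\log m)^2}{(\log m)^{1-o(1)}} \ll \frac{1}{(\log m)^{3/4-o(1)}}.
\]

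For the tail $|w_k(n)| > N$ I would partition dyadically into scales $W \leq |w_k(n)| < 2W$. The key technical step is converting the $K$-lower bounds from properties \ref{item:remove-w-bad} (for $N \leq W \leq (\log m)^2$) and \ref{item:remove-w-really-bad} (for $W > (\log m)^2$), where $K$ counts the $S$-points lying strictly between $m$ and $n$, into a distance lower bound $D_W$ valid for every $n$ in the dyadic shell. Property \ref{item:not-many-close-nbrs} applied with $T = |m-n|$ gives $K \ll |m-n|(\log|m-n|)^2/(\log m)^{1/2-o(1)}$, and combining with the $K$-bounds---together with an a priori estimate on $|m-n|$ in each regime (e.g., the bootstrap $|m-n| \leq W^2$ for medium $W$ and $|m-n| \leq W^{3/2+o(1)}$ for large $W$, which makes $(\log|m-n|)^2$ controllable)---yields
\[
D_W \gg \frac{W^2}{(\log m)^{o(1)}}\ \text{for medium } W, \qquad D_W \gg \frac{W^{3/2}(\log m)^{1/2-o(1)}}{(\log W)^3}\ \text{for large } W.
\]
Applying property \ref{item:crucial-sum-bound} at $G = D_W$ then bounds the contribution at scale $W$ by $2W \cdot H_{D_W}(m)$, which works out to $\ll (\log m)^{o(1)}/(W\sqrt{\log m})$ in the medium regime and $\ll (\log W)^5/(\sqrt{W}(\log m)^{1-o(1)})$ in the large regime. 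Geometric summation over dyadic scales $W$ shows the medium regime is dominated by the smallest scale $W = N$ and contributes $\ll 1/(N\sqrt{\log m}) \cdot (\log m)^{o(1)} = 1/(\log m)^{3/4-o(1)}$, while the large regime contributes $\ll 1/(\log m)^{2-o(1)}$, negligible compared to the target.

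The main obstacle is the self-consistency bookkeeping used to convert the $K$-bounds of properties \ref{item:remove-w-bad} and \ref{item:remove-w-really-bad} into the distance lower bounds on $D_W$: one must carefully track the $(\log|m-n|)^2$, $(\log W)^2$, and $(\log\log m)^2$ polylogarithmic factors so that they are absorbed into $(\log m)^{o(1)}$ in each regime. Once that is in hand, the three regimes combine to give the stated bound.
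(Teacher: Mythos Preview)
Your proposal is correct and follows essentially the same three-regime decomposition (small/medium/large $|w_k(n)|$) as the paper's proof, including the conversion of the count bounds from properties~\ref{item:remove-w-bad} and~\ref{item:remove-w-really-bad} into distance lower bounds and the use of property~\ref{item:crucial-sum-bound} to control the resulting tails. The only noteworthy difference is in the large-$W$ regime ($W \geq (\log m)^2$): rather than routing through property~\ref{item:not-many-close-nbrs} and a bootstrap to bound $\log|m-n|$, the paper simply observes that $|m-n|$ is at least the number of elements of $S$ lying between $m$ and $n$ (since $S\subset\Z$), giving $|m-n|\gg W^{3/2}/\log W$ directly, and then bounds the contribution by the crude $\sum_{k\geq A}1/k^2\ll 1/A$ instead of invoking property~\ref{item:crucial-sum-bound} --- a shortcut that sidesteps the polylogarithmic bookkeeping you flagged as the main obstacle.
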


\begin{proof}
%\subsection{Proof of Lemma~\ref{lem:claim-lemma}}
Fix $m \in S_{1}(x)$.  To simplify the notation, let $L = \log m$.  
To bound the sum
$$
\sum_{n \in S, n \neq m, m_{-}}
%\frac{|w_{k}(n)|}{|\lambda_{m}-n|^{2}}
\frac{|w_{k}(n)|}{|m-n|^{2}}
$$
we split it into parts according to the size of $|w_{k}(n)|$.

{\em Small $|w_{k}(n)|$}: $|w_{k}(n)| \leq L^{1/4} (\log L)^{2}$.
Since $m \in S_{1}$, its nearest neighbors,
by property~\ref{item:no-near-nbrs} 
%by properties~\ref{item:no-near-nbrs} and
%\ref{item:no-near-second-nbrs}, 
are of distance at least $ 
L^{1/2-o(1)}$ away from $m$.  Thus, the contribution from $n$ for
which $|w_{k}(n)| \leq L^{1/4} (\log L)^{2}$ is, by property
\ref{item:crucial-sum-bound},
$$
\ll \sum_{\substack{n \in S \\|n-m| \geq L^{1/2-o(1)}}}
\frac{ L^{1/4} (\log L)^{2}}{|m-n|^{2}} =
L^{1/4} \cdot (\log L)^{2} \cdot H_{L^{1/2-o(1)}}(m)
$$
$$ \ll
\frac{L^{1/4} (\log L)^{4}}{L^{1/2-o(1)}
  L^{1/2-o(1)}} = \frac{1}{L^{3/4 -o(1)}}.
$$

{\em Medium $|w_{k}(n)|$}: $|w_{k}(n)| \in [L^{1/4} (\log
L)^{2},L^{2}]$.
For terms in the sum for which $n \geq 2m$, we use the crude bound
$|w_{k}(n)| \leq r_{2}(n) \ll \sqrt{n}$ and find that the total
contribution is $\ll \sum_{n \geq m} n^{-3/2} \ll m^{-1/2}$, and hence
it is enough to consider terms for which $n < 2m$.

Let $W_{i} = 2^{i} L^{1/4} (\log L)^{2}$ for integer $i \geq 0$ such
that $2^{i} L^{1/4} (\log L)^{2} \leq L^{2}$ and consider $n$ such
that $|w_{k}(n)| \in [W_{i}, W_{i+1}]$.  By
property~\ref{item:remove-w-bad}, the number of elements in $S$ between $n$
and $m$ is $\gg W_{i}^{2}/(L^{1/2+o(1)} (\log W_{i})^{2})$. 
Thus, using the bound on the number of close neighbors, i.e., take $T
= |n-m|$ in property~\ref{item:not-many-close-nbrs} (note that $T \ll
m$ when $n < 2m$), we must have $\frac{T (\log T)^{2}}{(\log
  m)^{1/2-o(1)}} \gg W_{i}^{2}/(L^{1/2+o(1)} (\log W_{i})^{2})$),
which implies that $|n-m|\gg W_{i}^{2-o(1)}$.
Thus, by property~\ref{item:crucial-sum-bound} (take $G =
W_{i}^{2-o(1)}$), 
\begin{multline*}
\sum_{\substack{n \in S \\ |w_{k}(n)| \in [W_{i},W_{i+1}]}}
\frac{|w_{k}(n)|}{|n-m|^{2}} \ll
 \sum_{n \in S : |m-n| \gg W_{i}^{2-o(1)}}
\frac{W_{i}}{|n-m|^{2}} \ll
\frac{W_{i} (\log W_{i})^{2}}{L^{1/2-o(1)}W_{i}^{2-o(1)}}
\\=
\frac{1}{L^{1/2-o(1)} \cdot W_{i}^{1-o(1)}}
=
\frac{1}{L^{1/2-o(1)}\cdot (2^{i}L^{1/4}(\log L)^{2})^{1-o(1)}}
\ll
\frac{1}{L^{3/4-o(1)} (3/2)^{i}}.
\end{multline*}
Summing over relevant $i \geq 0$, we find that the total contribution
is 
$
\ll
\frac{1}{L^{3/4-o(1)}}.
$

{\em Large $|w_{k}(n)|$}: $|w_k(n)| \geq L^{2}$.  Let $W_{i} = 2^{i}
L^{2}$ and consider $n$ such that $|w_{k}(n)| \in [W_{i}, 2W_{i}]$.
By property \ref{item:remove-w-really-bad}, we must then have $|n-m|
\gg W_{i}^{3/2}/\log W_{i}$, and hence the contribution is (using the
bound $\sum_{k \geq A} 1/k^{2} \ll 1/A$)
\begin{equation*}
\begin{split}
\ll  \sum_{\substack{n \in S \\ |n-m|\gg W_{i}^{3/2}/\log W_{i}}}
\frac{W_{i}}{|n-m|^{2}} 
\ll
\frac{W_{i} \cdot \log W_{i}}{W_{i}^{3/2}} 
\ll \frac{1}{W_{i}^{1/2-o(1)}}
= \frac{1}{(2^{i}L^{2})^{1/2-o(1)}}.
\end{split}
\end{equation*}
Summing over  $i \geq 0$, the total contribution is
$$
\ll \frac{1}{L^{1-o(1)}} \sum_{i \geq 0} 2^{-(1/2-o(1))i} \ll 
\frac{1}{L^{1-o(1)}}.
$$

\end{proof}

\begin{proof}[Proof of Proposition~\ref{momentumbound} using
  Lemma~\ref{lem:claim-lemma}]
Recalling that $m_{-} < \lambda_{m} < m$, we note that that
$|\lambda_{m}-n| \geq |m-n|$ if $n>m$. 
Moreover, for $n \leq m_{--}$ the minimum of $|\lambda_{m}-n|/|m-n| =
1 - \frac{m-\lambda_{m}}{m-n}$
(as $n \leq m_{--}$ ranges over elements in $S$)
is attained for $n=m_{--}$, and consequently
$$
|\lambda_{m}-n|/|m-n| \geq |\lambda_{m}-m_{--}|/|m-m_{--}| 
\geq 
|m_{-}-m_{--}|/|m-m_{--}|
$$
which, by properties %\ref{item:no-near-nbrs}, (prop 3 not needed.)
\ref{item:no-near-second-nbrs}, and \ref{item:no-far-nbrs}, is $\gg
(\log m)^{1/2-o(1)}/(\log m)^{1/2+o(1)} = 1/(\log m)^{o(1)}$.  
Hence $|\lambda_{m}-n| \gg (\log m)^{-o(1)} |m-n|$ holds for $n \neq
m, m_{-}$, and thus
$$
\sum_{n \in S, n \neq m, m_{-}}
\frac{|w_{k}(n)|}{|\lambda_{m}-n|^{2}}
\leq
(\log m)^{o(1)} \cdot \sum_{n \in S, n \neq m, m_{-}}
%\frac{|w_{k}(n)|}{|\lambda_{m}-n|^{2}}
\frac{|w_{k}(n)|}{|m-n|^{2}}
$$

Let $M =
\min( |\lambda_{m}-m|^{2}, |\lambda_{m}-m_{-}|^{2})$.  Trivially
$\lambda_m \gg m^{1/2}$, and by property~\ref{item:mult-near-mean},
Lemma~\ref{lem:claim-lemma} implies that
\begin{multline*}
\frac{1/\lambda_m^{2} + \sum_{n \in S}
  \frac{|w_{k}(n)|}{|\lambda_{m}-n|^{2}} }
{1/\lambda_m^{2} + \sum_{n \in S}
  \frac{r_{2}(n)}{|\lambda_{m}-n|^{2}} } 
\ll
\frac{ O(1/m)+ (|w_{k}(m)|+|w_{k}(m_{-})|)/M + \frac{1}{(\log
    m)^{3/4-o(1)}}   }
{\frac{O(1/m)+ (\log m)^{(\log 2)/2-o(1)}}{M}}
\end{multline*}
which, by property~\ref{item:almost-square-root-cancellation}, is
\begin{equation}
  \label{eq:cant-name-this}
\ll
\frac{(\log m)^{1/4+o(1)}+\frac{M }{(\log m)^{3/4-o(1)}}}
{(\log m)^{(\log 2)/2-o(1)}}.
\end{equation}
Recalling that $\lambda_{m}\in [m_{-},m]$,
property~\ref{item:no-far-nbrs} implies that $M \ll (\log m)^{1+o(1)}$
and we thus find that (\ref{eq:cant-name-this}) is
$$
\ll
\frac{(\log m)^{1/4+o(1)}+\frac{(\log m)^{1+o(1)}}{(\log m)^{3/4-o(1)}}}
{(\log m)^{(\log 2)/2-o(1)}}
= 
\frac{1}{(\log m)^{(\log 2)/2-1/4-o(1)}} = o(1)
$$
as $m \to \infty$, since $(\log 2)/2-1/4 = 0.09657\cdots$.  Recalling
that  $\log m \gg \log \lambda_{m}$ (cf. Remark~\ref{rem:same-size})
the proof is concluded.

\end{proof}

\subsection{Proof that $S_1$ has full  density}
\label{sec:proof-that-s_1}

\subsubsection{Property (\ref{item:mult-near-mean})}
That $r_{2}(m) = (\log m)^{(\log 2)/2+o(1)}$ holds for almost all $m \in S$
follows from corollary~\ref{cor:r2-normal-order}.
To ensure that $r_{2}(m_{-}) = (\log m)^{(\log 2)/2+o(1)}$ also holds,
we remove the right neighbor of those $m$ for which $r_{2}(m) =
(\log m)^{(\log 2)/2+o(1)}$ is not true; this removes another zero
density set.  
(By Remark~\ref{rem:same-size}, $\log m_{+} = (1+o(1)) \log m$.)

\subsubsection{Property  (\ref{item:almost-square-root-cancellation})}
By Proposition~\ref{prop:w-k-l2-bound}
$$
\sum_{n \in S(x) } |w_{k}(n)|^{2} \ll  x
$$
and Chebychev's inequality, together with $|S(x)| \sim cx/\sqrt{\log
  x}$, then gives that $|w_k(m)| \leq (\log
m)^{1/4+o(1)}$ holds for almost all $m \in S(x)$.  Removing right
neighbors, as in the proof of Property~(\ref{item:mult-near-mean}),
the same holds for $|w_{k}(m_{-})|$.

\subsubsection{Property  (\ref{item:no-near-nbrs})}
Let $f$ denote the characteristic function of $S$. By
Theorem~\ref{thm:pair-corr-bound}, 
$$
\sum_{m \leq x} \sum_{h : 0 <|h| \leq (\log m)^{1/2-o(1)} } f(m) f(m+h)
\ll
\frac{x}{\log x}
\sum_{h : 0< |h| \leq (\log x)^{1/2-o(1)} } c(h)
$$
$$
\ll
\frac{x }{\log x} \cdot (\log x)^{1/2-o(1)}.
$$
Thus, by Chebychev's inequality, 
$$
\sum_{h : 0<|h| \leq (\log m)^{1/2-o(1)} } f(m) f(m+h) < 1
$$
holds for almost all $m$ in $S(x)$.  Consequently, almost all $m \in
S$ have no nearby neighbors.

\subsubsection{Property  (\ref{item:no-near-second-nbrs})}
We use the same proof as the one used for showing that 
Property  (\ref{item:no-near-nbrs}) holds.

\subsubsection{Property  (\ref{item:no-far-nbrs})}
Let $n_{1} < n_{2} \ldots < n_I \leq x$ denote ordered representatives
of the elements in $S(x)$, and let $s_{i} = n_{i+1}-n_{i}$.  Since
$\sum_{i < I} s_{i} \leq x$, Chebychev's inequality implies that
$s_{i} \leq (\log n_{i})^{1/2+o(1)}$ holds for almost all $n_{i} \in
S$; consequently $n_{+}-n \leq (\log n_{i})^{1/2+o(1)}$ for almost all
$n \in S$.

A similar argument shows that $n_{i}-n_{i-2}  \leq (\log
n_{i})^{1/2+o(1)}$ also holds for almost all $n_{i}$. Hence  both
$n-n_{-} \leq (\log n_{i})^{1/2+o(1)}$ and $n_{-}-n_{--} \leq (\log
n_{i})^{1/2+o(1)}$ holds for almost all $n \in S$.

\subsubsection{Property  (\ref{item:not-many-close-nbrs})}
The argument is similar to the one used to prove property
\ref{item:no-near-nbrs}: again let $f$ denote the characteristic
function of $S$. Then, as $T \ll m \leq x$,
Theorem~\ref{thm:pair-corr-bound} gives that
$$
\sum_{m \leq x} \sum_{h : 0 <|h| \leq T} f(m) f(m+h)
\ll
\frac{x}{\log x}
\sum_{h : 0< |h| \leq T } c(h)
\ll
\frac{xT}{\log x}
$$
and Chebychev's inequality implies that
$$
\sum_{h : 0<|h| \leq T } f(m) f(m+h) \geq
\frac{T (\log T)^{2}}{(\log x)^{1/2-o(1)}}
$$
holds for at most $\frac{x}{(\log x)^{1/2+o(1)} (\log T)^{2}}$
exceptional elements $m \in S(x)$.  Taking $T_{i}=2^{i}$, removing the
exceptional elements, and summing over $i$ we find that we have removed
$$
\ll \frac{x}{(\log x)^{1/2+o(1)}} \sum_{i \geq 0} 1/i^{2} = o(|S(x)|)
$$
elements.

Thus, property~\ref{item:not-many-close-nbrs} holds for $T$ being a
power of two.  To see that it holds for all $T \ll m$, take $i$ to be
the smallest integer such that $T_{i} = 2^{i}\geq T$ and note that $T
\in [T_{i}/2, T_{i}]$. 

\subsubsection{Property  (\ref{item:remove-w-bad})}

Given $n \in S$ such that $$w = |w_{k}(n)| \in [(\log n)^{1/4}(\log
\log n), (\log n)^{2}],$$ remove $2\cdot w^{2}/( (\log n)^{1/2} (\log
\log n) (\log w)^{2})$ neighbors to the left of $n$, and $2 \cdot
w^{2}/( (\log n)^{1/2} (\log \log n) (\log w)^{2})$ neighbors to the
right, and let $R_{n}$ denote the set of such removed elements.

Fix $x$ and consider the number of removed elements in $[1,x]$.  We
claim that if $l \in S(x)$ has been removed, then $l \in R_{n}$ for
some $n \leq 2x$.  To show this, we note that given an integer $t$, we
can always find $l_{1},l_{2} \in S$ such that $l_{1} < t < l_{2}$, and
$l_{2}-l_{1} \ll \sqrt{t}$ (just take nearby squares), and since
$|w_{k}(n)|\leq r_{2}(n) \leq n^{o(1)}$, any $R_{n}$ will be contained
in an interval of length $\ll n^{1/2+o(1)}$.

Hence it suffices to bound the union of $R_{n}$ for $n \leq 2x$.  The
removed contribution from $n$ for which $n \leq x/(\log x)^{10}$ is at
most $\frac{x \cdot (\log x)^{4}}{(\log x)^{10}} = o(|S(x)|)$ (here we
use the assumption $w \leq (\log n)^{2}$).

On the other hand, for $n \in [x/(\log x)^{10},2x]$, we have $\log n =
(1+o(1))\log x$. 
Let $W_{i} = 2^{i} (\log x)^{1/2}(\log \log x)$, and consider $n \in
S(x)$ such that $|w_{k}(n)| \in [W_{i},2W_{i}]$.  By
Proposition~\ref{prop:w-k-l2-bound} and Chebychev's inequality, the
number of such $n$ 
is $\ll \frac{x}{W_{i}^{2}}$, and the total number of removed elements
is thus
$$
\ll \frac{x}{W_{i}^{2}} \cdot 
\frac{W_{i}^{2}}{ (\log x)^{1/2} (\log \log x) (\log W_{i})^{2}}
\ll
\frac{x}{ (\log x)^{1/2} (\log \log x) \cdot i^{2}}
$$
Summing over $i \geq 0$ we find that the total number of removed
elements is 
$$
\ll  \frac{x}{ (\log x)^{1/2} (\log \log x) \cdot i^{2}} = o(|S(x)|).
$$

\subsubsection{Property  (\ref{item:remove-w-really-bad})}

Arguing as before, if $|w_{k}(n)| \geq (\log n)^{2}$
let $w=|w_{k}(n)|$ and remove
the nearest $2w^{3/2}/\log w$ neighbors to the right and left of $n$;
let $R_{n}$ denote the set of removed neighbors.

Fix $x$ and consider the number of removed elements in $[1,x]$.  
We first note that $|w_{k}(n)| \leq r_{2}(n) \ll n^{1/100}$ holds for
all $n \in S$.
Consequently $R_{n}$, if non-empty, contains at most $n^{3/200}$
neighbors of $n$ which (since $|S(2y)|-|S(y)| \gg y/\sqrt{\log y}$ for
all $y$ by Landau) implies that if $l \in S(x)$ and $l$ belongs to
some $R_{n}$, then $n \leq 2x$.

Consider first the removed contribution coming from $R_{n}$ for which
$n \leq \sqrt{x}$.  Since $|w_{k}(n)| \leq r_{2}(n) \ll n^{1/100}$,
the total contribution is
$$
\ll \sqrt{x} \cdot (x^{1/100})^{3/2} = o(|S(x)|).
$$

If $n \in [\sqrt{x}, 2x]$ and $|w_{k}(n)| \geq (\log n)^{2}$, we have 
$$
|w_{k}(n)| \geq (\log x)^{2}/100.
$$
Define $W_{i}= 2^{i} \cdot (\log x)^{2}/100$ and consider the removed
contribution from $R_{n}$ for which $|w_{k}(n )| \in [W_{i},2W_{i}]$.
By Proposition~\ref{prop:w-k-l2-bound} and Chebychev's inequality, the
number of such $n \in S(2x)$ is
$
\ll \frac{x}{W_{i}^{2}}
$
and the associated removed contribution is
$$
\ll \frac{x \cdot (W_{i}^{3/2}/\log W_{i})}{W_{i}^{2}}
\ll \frac{x}{W_{i}^{1/2} \log W_{i}} 
\ll \frac{x}{(2^{i}( \log x)^{2})^{1/2} } = \frac{x}{2^{i/2} \log x}.
$$

Summing over $i \geq 0$ we find that the total contribution is
$$
\ll \sum_{i \geq 0}\frac{x}{2^{i/2} \log x}  = o(|S(x)|).
$$

\subsubsection{Property  (\ref{item:crucial-sum-bound})}

The final property is an immediate consequence of the following
Lemma.

\begin{lem} If $\epsilon>0$ then for almost all $m \in S(x)$, we have,
  for any $T \in [2, x^{1-\epsilon}]$,
$$
\sum_{\substack{n \in S\\ |n-m| \geq T}} \frac{1}{(m-n)^{2}} \ll
\frac{(\log T)^{2}}{T (\log x)^{1/2-o(1)}}
$$
\end{lem}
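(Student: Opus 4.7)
The plan is to deduce the Lemma directly from Property~\ref{item:not-many-close-nbrs}, which already provides a uniform bound, over $T \ll m$, on the number of elements of $S$ in a short interval around a typical $m$. Since that property was shown to hold for almost all $m \in S(x)$ with a single exceptional set independent of $T$, the same exceptional set will serve for the Lemma.

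Fix such a typical $m \in S(x)$ and a target $T \in [2, x^{1-\epsilon}]$, and dyadically decompose the tail sum:
\[
\sum_{\substack{n \in S \\ |n-m| \geq T}} \frac{1}{(m-n)^{2}}
\;\leq\; \sum_{j \geq 0} \frac{N_j(m)}{(2^j T)^2},
\]
where $N_j(m) := |\{n \in S : |n-m| \leq 2^{j+1} T\}|$. For $j$ in the ``near'' range $2^{j+1} T \leq m$, Property~\ref{item:not-many-close-nbrs} gives
\[
N_j(m) \ll \frac{(2^{j+1} T)(\log(2^{j+1} T))^2}{(\log m)^{1/2-o(1)}},
\]
so this shell contributes $\ll (\log(2^{j+1} T))^2/(2^j T (\log m)^{1/2-o(1)})$. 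Expanding $(\log T + j \log 2)^2 \leq 2(\log T)^2 + 2(j \log 2)^2$ and using $\sum_{j \geq 0} j^2/2^j = O(1)$, the near-range contribution sums to $\ll (\log T)^2/(T(\log m)^{1/2-o(1)})$.

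For $j$ in the ``far'' range $2^{j+1} T > m$, I would abandon the arithmetic bound and use the trivial inequality $\sum_{n \in \Z,\; |n-m| \geq A} 1/(n-m)^2 \ll 1/A$ at $A \asymp m$, yielding a contribution of $O(1/m)$. Since $T \leq x^{1-\epsilon}$ and $\log m \sim \log x$ for almost all $m \in S(x)$ (for instance, for $m \geq x/\log x$), combining the two ranges gives the claimed bound $(\log T)^2/(T(\log x)^{1/2-o(1)})$. The only subtlety is uniformity of the bound in $T$, and this is precisely what Property~\ref{item:not-many-close-nbrs} already supplies; I do not anticipate any real obstacle beyond the bookkeeping.
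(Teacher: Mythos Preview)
Your argument is correct, and it takes a genuinely different route from the paper's own proof of the Lemma.

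The paper proves the Lemma from scratch: it bounds the far tail $n\geq 2x$ trivially, then averages the remaining sum over $m$ using Rieger's pair-correlation estimate to obtain
\[
\sum_{\substack{m,n\in S(2x)\\ |n-m|\geq T}}\frac{1}{(m-n)^2}\ll \frac{x}{T\log x},
\]
applies Chebyshev to extract an exceptional set for each dyadic value $T_i=2^i$, and finally unions these exceptional sets over $i$ (the $(\log T)^2$ factor is inserted precisely to make $\sum_i 1/i^2$ converge). In other words, the paper repeats essentially the same Rieger--Chebyshev--dyadic machinery that was already used to establish Property~(\ref{item:not-many-close-nbrs}).

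Your approach instead recognises that once Property~(\ref{item:not-many-close-nbrs}) is in hand (with its single exceptional set valid uniformly in $T\ll m$), the Lemma follows by partial summation: dyadically decompose in $|n-m|$, feed the shell counts $N_j(m)$ into the already-proved bound, and sum the resulting geometric-type series. The far range $|n-m|\gg m$ is dispatched by the trivial integer tail bound $O(1/m)$, which is negligible since $T\leq x^{1-\epsilon}$ and $m\geq x/\log x$ for almost all $m\in S(x)$.

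What your approach buys is economy: it makes explicit that Property~(\ref{item:crucial-sum-bound}) is a formal consequence of Property~(\ref{item:not-many-close-nbrs}), so no second pass through Rieger and Chebyshev is needed. The paper's direct argument is self-contained but mildly redundant. Both arguments rest on the same arithmetic input (Theorem~\ref{thm:rieger-satz-2}), and both produce a single exceptional set independent of $T$; yours just factors the work more cleanly.
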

\begin{proof}
  We first bound the sum over $n \in S \setminus S(2x)$, i.e., those
  $n$ for which $n \geq 2x$: 
$$
\sum_{\substack{n \in S\\ |n-m| \geq T\\ n \geq 2x}}
\frac{1}{(m-n)^{2}} \leq \sum_{k\geq x} 1/k^{2}
\ll 1/x
= o \left( \frac{(\log T)^{2}}{T (\log x)^{1/2-o(1)}} \right).
$$
(Recall that $m \leq x$ since $m \in S(x)$, and that $T \leq x^{1-\epsilon}$.)

Next we note that
$$
\sum_{\substack{m,n \in S(2x)\\ |n-m| \geq T}} \frac{1}{(m-n)^{2}} 
=
\sum_{k \geq T} \frac{|\{ m,n \in S(2x): |m-n| = k \}|}{k^{2}}
$$
By Theorem~\ref{thm:pair-corr-bound},
$$
|\{ m,n \in S(2x): |m-n| = h \}| \ll \frac{x \cdot c(h)}{\log x}
$$
and, by partial summation and using that $c(h)$ is bounded on average
(cf. (\ref{eq:ck-bounded-on-average}), 
$$
%\sum_{\substack{m \in S(2x)}}
%\sum_{\substack{n \in S(2x)\\ |n-m| \geq T}} \frac{1}{(m-n)^{2}}  =
\sum_{\substack{m,n \in S(2x)\\ |n-m| \geq T}} \frac{1}{(m-n)^{2}} 
\ll
\frac{x}{\log x} \sum_{h \geq T} \frac{c(h) }{h^{2}}
\ll \frac{x}{T \log x}.
$$
By Chebychev's inequality, the number of $m \in S(2x)$ for which 
$\sum_{\substack{n \in S(2x)\\ |n-m| \geq T}} \frac{1}{(m-n)^{2}}  \geq
\frac{(\log T)^{2}}{T (\log x)^{1/2-o(1)}}$ holds is thus
$$
\ll \frac{x}{T \log x} \bigg/ \frac{(\log T)^{2}}{T (\log x)^{1/2-o(1)}} =
\frac{x}{(\log x)^{1/2+o(1)} \cdot (\log T)^{2}}
$$
Taking $T_{i} = 2^{i}$, summing over $i \ll \log x$, and recalling
that $|S(x)| \sim x/\sqrt{\log x}$ we find that the property holds in
the special case of $T$ being a power of two.  The result for general
$T$ follows by taking the largest $i$ such that $T_{i} = 2^{i} \leq T$
and noting that $T_{i} \in [T/2, T]$.
\end{proof}

\section{Proof of Theorem \ref{QE}}
Let $a\in C^\infty(S^*\T^2)$ be a smooth observable with rapidly
decaying Fourier expansion
$$
a(x,\phi)=\sum_{\zeta\in\Z^2 ,k\in\Z}\hat{a}(\zeta,k)e^{\i\left\langle
    \zeta,x \right\rangle+\i   k\phi}.
$$
Since $\Op(e_{\zeta,k})$ is unitary (cf. \eqref{pseudo}) for all
$\zeta,k$, we have $|\left\langle \Op(e_{\zeta,k})g_\lambda,
  g_\lambda\right\rangle| = |\left\langle g_\lambda,
  g_\lambda\right\rangle|$, and the rapid decay of Fourier
coefficients then shows that given $\epsilon>0$, there exists $J$ such
that \be
\begin{split}
|\left\langle(\Op(a)-\Op(P_J))g_\lambda,g_\lambda\right\rangle|
\leq & \sum_{|\zeta|,|k|>J}|\hat{a}(\zeta,k)||\left\langle \Op(e_{\zeta,k})g_\lambda, g_\lambda\right\rangle| 
\leq \epsilon
\end{split}
\ee holds {\em uniformly} in $\lambda$, where $P_J(x,\phi)$ is the
trigonometric polynomial
\begin{equation}\label{trigpoly}
P_J(x,\phi)=\sum_{\substack{\zeta\in\Z^2,k\in\Z\\ |\zeta|,|k|\leq J}}\hat{a}(\zeta,k)e^{\i\left\langle \zeta,x \right\rangle+\i k\phi}
\end{equation}
obtained by truncating the Fourier expansion of $a$.  Hence it is
enough to show that for any fixed $J \geq 1$,
\begin{equation}
\left\langle \Op(P_J)
  g_\lambda,g_\lambda\right\rangle \to
\frac{1}{\Vol(S^*\T^2)}\int_{S^*\T^2} a \,
d\mu 
= \hat{a}(0,0)
\end{equation}
as $\lambda \to \infty$ along a full density subsequence of $S$.

Now, given $J \geq 1$, let
$$\tilde{S}_J:= \bigcap_{|k|\leq J}
\left( S'_k\cap S'  \right)$$ 
where $S'\subset S$ denotes the full density sequence of Theorem
\ref{mixed}.  (Since $S'$ and $S_{k}'$ have full densities for all $k
\neq 0$, so does $\tilde{S}_{J}$ for all $J$.)
It follows from the previous two sections that 
\begin{equation}
\label{polylimit}
\left\langle \Op(P_J)
  g_\lambda,g_\lambda\right\rangle\to\frac{1}{\Vol(S^*\T^2)}\int_{S^*\T^2}P_J
d\mu 
= \hat{a}(0,0)
\end{equation}
as $\lambda\in \tilde{S}_J\to\infty$.

In order to construct the full density sequence of Theorem \ref{QE} we
use a standard diagonalisation argument (see for instance \cite{CdV2})
to extract such a sequence from the list of sequences
$\{\tilde{S}_J\}_{J}$. By construction $\tilde{S}_{J+1}\subset
\tilde{S}_J$. Choose $M_J$ such 
that for all $X>M_J$ 
\be
\frac{\#\{\lambda\in \tilde{S}_J \mid \lambda\leq X\}}{\#\{\lambda\in S \mid \lambda\leq X\}}\geq 1-\frac{1}{2^J}
\ee
and let $S'_\infty$ be such that
$S'_\infty\cap[M_J,M_{J+1}]=\tilde{S}_J\cap[M_J,M_{J+1}]$ for all $J$. Then
$S'_\infty\cap[0,M_{J+1}]$ contains $\tilde{S}_J\cap[0,M_{J+1}]$ and therefore
$S'_\infty$ is of full density in $S$.  Moreover, for any $J\geq 1$, we have 
\begin{equation}
\left\langle \Op(P_J)g_\lambda,g_\lambda \right\rangle 
=
\int_{S^*\T^2}P_J d\mu_\lambda \to
\frac{1}{\Vol(S^*\T^2)}\int_{S^*\T^2}P_J d\mu 
=\hat{a}(0,0)
\end{equation}
as $\lambda\to\infty$ along $S'_\infty$ since  $S'_{\infty} \cap (M_{J+1},\infty)
\subset \tilde{S}_{J} \cap (M_{J+1},\infty)$.

\section{Number theoretic background}
\label{sec:numb-theor-backgr}

\subsection{Bounding mean values of multiplicative functions}
\label{Mean Values}

We recall that $r_{2}(n)/4$ is a {\em multiplicative function}, i.e.,
$r_{2}(mn)/4 = r_{2}(m)/4 \cdot r_{2}(n)/4$ if $(m,n)=1$, and
similarly $w_{k}(n)/4$ is also multiplicative (e.g., see the proof of
Proposition~6 in \cite{fkw-lattice}.)  In particular, both functions
are determined by 
their values at prime powers, and we have
$$
\frac{r_{2}(p^{e})}{4} = 
\begin{cases}
%1 & \text{if $p = 2$,}
 e+1 & \text{if $p \equiv 1 \mod 4$,}
\\ 1 & \text{if $p \equiv 3 \mod 4$ and $e$ is even, or if $p=2$,}
\\ 0 & \text{if $p \equiv 3 \mod 4$ and $e$ is odd.}
\end{cases}
$$
For $p \equiv 1 \mod 4$, define the angle $\theta_{p} \in [0,\pi/4)$
by $\cos \theta_p = x/\sqrt{x^{2}+y^{2}}$, where $x^{2}+y^{2} = p$ for
$x,y \in \Z$ and $0 \leq y \leq x$.  We then have (if $4|k$)

$$
\frac{w_{k}(p^{e})}{4} = 
\begin{cases}
\sum_{l=0}^{e}   e^{i \cdot \theta_{p} \cdot (e-2l)} 
& \text{if $p \equiv 1 \mod 4$,}
\\ 1 & \text{if $p \equiv 3 \mod 4$ and $e$ is even,}
\\ 0 & \text{if $p \equiv 3 \mod 4$ and $e$ is odd,}
\\ \pm 1 & \text{if $p=2$.}
\end{cases}
$$
In particular (again for $4|k$), $w_{k}(2)/4 =  (-1)^{k/4}$, and for odd
primes we have
\begin{equation}
  \label{eq:weyl-sum-on-primes}
\frac{w_{k}(p)}{4} = 
\begin{cases}
2 \cos(k \theta_{p}) & \text{for $p \equiv 1 \mod 4$,}\\
0 &\text{for $p \equiv 3 \mod 4$.}
\end{cases}
\end{equation}

Now, let $f$ be a non-negative multiplicative function such  that
for all prime powers $f(p^{k}) \ll \gamma^{k}$ holds for some $\gamma
< 2$, and 
$$
\sum_{p \leq x } f(p) = \frac{x}{\log x} \cdot (\tau + o(1)),
$$
as $x \to \infty$, for some constant $\tau$.  Satz~1 of Wirsing
\cite{Wirsing} then implies that 
$$
\sum_{n \leq x } f(n)
\ll_{\tau} \frac{x}{\log x} \cdot
\prod_{p \leq x}
\left(
1+\frac{f(p)}{p}+ \frac{f(p^{2})}{p^{2}}+  \ldots
\right).
$$
%(in fact, Wirsing determines the asymptotic, but we shall only need an
%upper bound.)

%\begin{proof}[Proof of Proposition~\ref{prop:w-k-l2-bound}]
\subsubsection{Proof of Proposition~\ref{prop:w-k-l2-bound}}
  For $k$ fixed, define a multiplicative function $f(n) :=
  (|w_{k}(n)|/4)^{2}$ (recall that $|w_{k}(n)|/4$ is 
  multiplicative.)
By \eqref{eq:weyl-sum-on-primes}, we have
$$
f(p) =
\begin{cases}
1 & \text{for $p=2$}, \\
 (2 \cos(k\theta_p))^{2} & \text{for $p \equiv 1 \mod 4$}, \\
 0 & \text{for $p \equiv 3 \mod 4$,}
\end{cases}
$$
and we find that
$$
\sum_{p \leq x} f(p) =
1 + \sum_{p \leq x, p \equiv 1 \mod 4} f(p) =
1 + \sum_{p \leq x, p \equiv 1 \mod 4} (2 \cos( 2 \pi k \theta_{p}))^{2}.
$$
Thus Hecke's result on angular equidistribution of split Gaussian primes
(see \cite{Hecke}) gives that 
\begin{equation}
  \label{eq:prime-sum}
\sum_{p \leq x} f(p)
= \frac{x}{\log x} \cdot 
\left(
\frac{1}{2} \cdot \int_0^{1} (2 \cos(2 \pi k \theta))^{2} \, d \theta +o(1)
\right)
=
\frac{x}{\log x} \cdot  \left( 1 +o(1) \right)
\end{equation}

Hence Wirsing's Satz~1 applies (also note that $f(p^{k}) \ll k^{2}$
for all $p,k$), thus
$$
\sum_{n \leq x} |w_{k}(n)|^{2} \ll 
\sum_{n \leq x} f(n) \ll 
\frac{x}{\log x} \cdot 
\prod_{p \leq x}
\left(
1+\frac{f(p)}{p}+ \frac{f(p^{2})}{p^{2}}+  \ldots
\right).
$$
Now, since $\sum_{k=2}^{\infty} f(p^{k})/p^{k} \leq 
\sum_{k=2}^{\infty} (k+1)^{2}/p^{k} \ll 1/p^{2}$, we find
that 
$$
\prod_{p \leq x}
\left(
1+\frac{f(p)}{p}+ \frac{f(p^{2})}{p^{2}}+  \ldots
\right) \ll
\exp \left( \sum_{p \leq x} \frac{f(p)}{p} \right)
=
\exp \left( \log \log x + O(1) \right),
$$
where the final equality follows from (\ref{eq:prime-sum}) and partial
summation.
Hence
$$
\sum_{n \leq x} |w_{k}(n)|^{2} \ll \frac{x}{\log x} \cdot \exp( \log
\log x + O(1)) \ll x.
$$
% (Recall that the angle $\theta_{p}$ is defined by $\cos \theta_p =
% x/\sqrt{x^{2}+y^{2}}$ where $x^{2}+y^{2} = p$ for $x,y \in \Z$, and
% $\theta_{p} \in [0,\pi/4)$.)

%
%\end{proof}

\subsection{Erd\"os-Kac Theory}\label{Erdos Kac}
Let $\omega(n)$ denote the number of distinct prime factors of an
integer $n$.  The celebrated Erd\"os-Kac theorem assert that the
distribution of $ \left\{ \frac{\omega(n)-\log\log n}{\sqrt{\log\log
      n}} \right\}_{n \leq x} $ is given by the standard normal
distribution as $x\to \infty$; in particular, a typical integer of
size $x$ has about $\log \log x$ prime factors.  We shall need some
analogous, but weaker,  results for elements in $S$.

Recall that given $n\in S$, $\omega_{1}(n)$ denotes the number of
prime factors, congruent to one modulo four, of $n$; i.e.,
with $\sum'_{p}$ denoting the sum over $p \equiv 1 \mod 4$, 
$$
\omega_1(n) := \sum'_{p|n} 1.
$$

%\begin{proof}[Proof of
%Proposition~\ref{prop:erdos-kac-moment-estimates}]
\subsubsection{Proof of Proposition~\ref{prop:erdos-kac-moment-estimates}}

Using that at most four primes $p \geq x^{1/4}$ can
divide an integer $n \leq x$, together with 
$\sum'_{p \leq x^{1/4} }  |\{ n \in S(x) : p|n  \}|=
\sum'_{p \leq x^{1/4} }  |S(x/p)|,$
we find that
\begin{multline*}
\sum_{n \in S(x)} \omega_1(n)  =
\sum_{n \in S(x)} \sum'_{p|n} 1 =
\sum_{n \in S(x)} \left(
\sum'_{p|n, p \leq x^{1/4}} 1  + O(1) \right)
\\=
\sum'_{ p \leq x^{1/4}} |S(x/p)| + O(|S(x)|)
\end{multline*}

By Landau, $|S(x/p)| = \frac{cx}{p \sqrt{\log(x/p)}} \cdot (1+O(1/\log
(x/p))$, and thus, what will be the main term, is given by
$$
\sum'_{ p \leq x^{1/4}} |S(x/p)|  =
\sum'_{ p \leq x^{1/4}} 
\frac{cx}{p \sqrt{\log(x/p)}}
\cdot (1+O(1/\log (x/p))
$$
If $p \in [x^{1/\log\log x}, x^{1/4}]$ then $\log(x/p) \gg \log x$
and thus the contribution from such primes $p$ is 
$$
\ll
\frac{x}{\sqrt{\log x}}
\sum'_{p \in [x^{1/\log\log x}, x^{1/4}]}  1/p
\ll
\frac{x}{\sqrt{\log x}} 
\cdot
\log
\left(
\frac{\log x^{1/4}}{ \log x^{1/\log\log x} }
\right)
\ll
\frac{x \log \log \log x}{\sqrt{\log x}} 
$$
which is of the same order as the claimed error term in the first
assertion of the Proposition.

Now, if $ p \leq x^{1/\log\log x}$ then $\log p \leq \log x/\log
\log x$ and thus
$$
\sqrt{\log(x/p)} = \sqrt{\log x- \log p} = 
\sqrt{ \log x} \left(1 - O
\left( \frac{1 }{\log \log x} 
\right) \right)
$$
Hence, by the analogue of Mertens' theorem for primes in
progressions\footnote{We shall only need that $\sum'_{p \leq x}1/p =
  1/2 \cdot \log \log x + O(1)$, a simple consequence of the prime number
  theorem for arithmetic progressions.}, 
together with $1/\log(x/p) \ll 1/\log x$ for $p \leq x^{1/\log\log
  x}$, we find that
\begin{multline*}
\sum'_{p \leq x^{1/\log\log x}} \frac{cx }{p \sqrt{\log(x/p)}} \cdot
(1+O(1/\log (x/p))
\\=
\frac{cx }{\sqrt{\log x}} \cdot (1+O(1/\log x)) \cdot (1+O(1/\log \log x)))
\sum'_{p \leq x^{1/\log\log x}} 1/p
= \\
\frac{cx }{\sqrt{\log x}} \cdot (1+O(1/\log \log x)))
\cdot
\left( \frac{1}{2} \log \left( \frac{\log x}{\log\log x} \right) +O(1)
\right)
= \\
\frac{cx }{\sqrt{\log x}} \cdot (1+O(1/\log \log x)))
\cdot (\frac{1}{2}\log \log x  + O(\log \log \log x)) 
= \\
\frac{cx }{\sqrt{\log x}} 
( \frac{1}{2} \log \log x + O(\log \log \log x))
\end{multline*}
Dividing by $|S(x)|$ and again using Landau's Theorem, the proof of
the first assertion is concluded.

The variance estimate is similar: since $n \leq x$ can have at most
$4$ prime divisors $p \geq x^{1/4}$, we have
\begin{multline*}
\sum_{n \in S(x)} \omega_1(n)^{2} =
\sum_{n \in S(x)}
\left( \sum'_{p \leq x, p|n} 1 \right)^{2}
=
\sum_{n \in S(x)}
\left( \sum'_{p \leq x^{1/4}, p|n} 1 + O(1) \right)^{2}
\\ =
\sum_{n \in S(x)}
\left(
\left( \sum'_{p \leq x^{1/4}, p|n} 1 \right)^{2}
+ 2 \sum'_{p \leq x^{1/4}, p|n} 1 
+ O(1)
\right)
\end{multline*}
The total contribution from the last two terms in the inner sum is, by
our first assertion (regarding the mean value of $\omega_1(n)$), 
$$
\ll
 \frac{x \log \log x}{\sqrt{\log x}}  + |S(x)|
\ll \frac{x \log \log x}{\sqrt{\log x}}.
$$

With $[a,b] = ab/(a,b)$ denoting the least common multiple of integers
$a,b$, we have
\begin{multline*}
\sum_{n \in S(x)}
\left( \sum'_{p \leq x^{1/4}, p|n} 1 \right)^{2}
=
\sum'_{p_{1},p_{2} \leq x^{1/4}}
|S(x/[p_{1},p_{2}])|
\\=
\sum'_{p_{1},p_{2} \leq x^{1/4}} 
|S(x/p_{1}p_{2})|
+ 
\sum'_{p \leq x^{1/4}} 
|S(x/p)|
-
\sum'_{p \leq x^{1/4}} 
|S(x/p^{2})|
\end{multline*}
The latter two terms are of lower order than the claimed main term ---
the argument used to estimate the mean of $\omega_1(n)$ implies that
$$
\sum'_{p \leq x^{1/4}} 
|S(x/p)|
\ll \frac{x \log \log x}{\sqrt{\log x}}
$$
and, again using Landau, we find that
$$
\sum'_{p \leq x^{1/4}} 
|S(x/p^{2})|
\ll \frac{x}{\sqrt{\log x}} \sum'_{p \leq x^{1/4}} 1/p^{2}
\ll \frac{x}{\sqrt{\log x}}.
$$

As for the double sum over small primes, again by Landau,
\begin{multline}
\label{eq:variance-small-prime-double-sum}
\sum'_{p_{1},p_{2} \leq x^{1/4}} 
|S(x/p_{1}p_{2})|
=
\sum'_{p_{1},p_{2} \leq x^{1/4}} 
\frac{c x}{p_{1}p_{2} \sqrt{\log(x/(p_{1}p_{2}))}}
\left( 1 + O(1/\log(x/(p_{1}p_{2})))\right)
\\= 
\sum'_{p_{1},p_{2} < x^{1/\log \log x}} \ldots
+
2 \cdot \sum'_{\substack{
p_{1} \leq x^{1/\log \log x}\\ p_{2} \in [x^{1/\log \log
      x}, x^{1/4}]}}  \ldots
+
 \sum'_{p_{1}, p_{2} \in [x^{1/\log \log  x}, x^{1/4}]} \ldots
\end{multline}
Again by the analogue of Mertens' Theorem for arithmetic progressions,
$$
 \sum'_{p\in [x^{1/\log \log  x}, x^{1/4}]}  1/p \ll \log \log \log x
$$
and 
$$
\sum'_{p \leq x^{1/\log \log  x}}  1/p = \frac{1}{2} \log \log x
+O(\log \log \log x)
$$
hence the contribution from the latter two sums in
(\ref{eq:variance-small-prime-double-sum}) is
$$
\ll \frac{x}{\sqrt{\log x}} (
(\log \log \log x) \cdot \log \log x +
(\log \log \log x)^{2} )
\ll
\frac{x \cdot (\log \log \log x) \cdot \log \log x  }{\sqrt{\log x}}
$$
which is of the same size as the claimed error term.

Finally, yet again by Landau, and that $\log(x/(p_{1}p_{2})) = \log x
(1+O(1/\log \log x))$ for $p_{1},p_{2} \leq x^{1/\log \log x}$,
we  find that
\begin{multline*}
\sum'_{p_{1},p_{2} < x^{1/\log \log x}}  |S(x/p_{1}p_{2})|
=
\sum'_{p_{1},p_{2} < x^{1/\log \log x}}  
\frac{c x}{p_{1}p_{2} \sqrt{\log( x/(p_{1}p_{2}))}} (1+O(1/\log x)) 
\\=
\sum'_{p_{1},p_{2} < x^{1/\log \log x}}  
\frac{c x}{p_{1}p_{2} \sqrt{\log x}} (1+O(1/\log x)) (1+O(1/\log\log
x)) 
\\=
\frac{cx }{\sqrt{\log x}}
\left(
\sum'_{p < x^{1/\log \log x}} 1/p 
\right)^{2}
(1+O(1/\log \log x))
\end{multline*}
Again by Mertens's for primes in progressions, we find that the main
term equals
$$
\frac{cx }{\sqrt{\log x}}
\left(
\frac{1}{2} \log \log x - O(\log\log \log x)
\right)^{2}
\cdot (1+O(1/\log \log x))
$$
Dividing by $|S(x)|$ and using Landau again, the main term is thus
$$
\frac{1}{4} (\log \log x)^{2} + O( (\log \log x) \cdot \log \log \log x  ).
$$

%\end{proof}

%\begin{proof}[Proof of Corollary~\ref{cor:r2-normal-order}]
\subsubsection{Proof of Corollary~\ref{cor:r2-normal-order}}
\label{sec:proof-coroll-refc}
Define a multiplicative function
$$
f(n) := \frac{r_{2}(n)}{4 \cdot 2^{\omega_1(n)}}
$$
If $p \equiv 1 \mod 4$, then $f(p^{e}) = (e+1)/2$; if $p \equiv 3 \mod
4$ then $f(p^{2e+1})=0$ whereas for even exponents $f(p^{2e})=1$.  Using
Wirsing's Satz~1 again, we find  (recall that $\sum'_{p\leq x}$
denotes the over primes $p \equiv 1 \mod 4$) that
\begin{multline*}
\sum_{n \in S(x)} f(n) = \sum_{n \leq x} f(n)
\ll
\frac{x}{\log x} \exp \left( \sum_{p \leq x} f(p)/p \right)
=
\frac{x}{\log x} \exp \left( \sum'_{\substack{p \leq x 
%\\ p \equiv 1    \mod 4
}} 1/p  + O(1)\right)
\\
\ll
\frac{x}{\log x} \exp \left( \frac{1}{2} \log \log x +O(1) \right)
\ll
\frac{x}{(\log x)^{1/2}} \ll |S(x)|
\end{multline*}
(here we again have used Mertens' Theorem for arithmetic progressions.)
Chebyshev's inequality then implies that the number of $n \in S(x)$
for which $f(n) \geq \log \log \log n$ is $o(|S(x)|)$.  In particular,
we find that 
$$
2^{\omega_1(n)} \leq r_{2}(n)/4 \leq  2^{\omega_1(n)} \cdot \log \log \log n
$$
holds for almost all $n \in S(x)$.  Now, since
Corollary~\ref{cor:r2-log-normal-order} 
%Proposition~\ref{prop:erdos-kac-moment-estimates} 
implies that
$\omega_1(n) = (1/2+o(1)) \log \log n$ for almost all $n \in S(x)$, we
find that
$$
r_{2}(n) = 2^{(1/2+o(1)) \log \log n } = (\log n)^{(\log 2)/2+o(1)}
$$
holds for almost all $n \in S(x)$.
%\end{proof}

% \bibliographystyle{abbrv} 
% \bibliography{mybib,mypapers}

\begin{thebibliography}{99}

%\bibitem{BerryTabor}
%M.~Berry, M.~Tabor, {\em Level clustering in the regular spectrum}, Proc. Royal Soc. Lond. A 356, 375--394, 1977.

%\bibitem{BerkolaikoKeatingWinn}
%G.~Berkolaiko, J.~P.~Keating and B.~Winn, {\em Intermediate wave function statistics}, Phys. Rev. Lett. 91, 134--103, 2003.

\bibitem{BerkolaikoKeatingWinn2}
G.~Berkolaiko, J.~P.~Keating and B.~Winn, {\em No quantum ergodicity for star graphs}, Comm. Math. Phys. Vol. 250, 259--285, 2004.

\bibitem{BohigasGiannoniSchmit}
O.~Bohigas, M.~J.~Giannoni and C.~Schmit, {\em Characterization of Chaotic Quantum Spectra and Universality of Level Fluctuation Laws}, Phys. Rev. Lett. 52, 1--4, 1984.

%\bibitem{Hillairet}
%T.~Aissiou, L.~Hillairet and A.~Kokotov, {\em Determinant of Pseudo-Laplacians}, arXiv:1202.4027v1.

\bibitem{BogomolnyGerlandSchmit}
E.~Bogomolny, U.~Gerland and C.~Schmit, {\em Singular Statistics}, Phys. Rev. E, Vol. 63, No. 3, 2001.

%\bibitem{CdV}
%Y.~Colin de Verdi\'ere, {\em Pseudo-laplaciens I}. Annales de l'Institut Fourier, tome 32, No.
%3, 275-286, 1982.

\bibitem{CdV2}
Y.~Colin de Verdi\`ere, {\em Ergodicit\'e et fonctions propres du laplacien.} Comm. Math. Phys. Vol. 102, No. 3, 497--502, 1985.

%\bibitem{EMM}
%A.~Eskin, G.~Margulis and S.~Mozes, {\em Quadratic forms of signature (2,2) and eigenvalue spacings on rectangular  tori}. Ann. of Math. (2) 161, no. 2, 679--725, 2005.

%\bibitem{FaddeevBerezin}
%F.~A.~Berezin and L.~D.~Faddeev, {\em Remark on the Schr\"odinger equation with a singular potential}, Dokl. Akad. Nauk SSSR 137, 1011--1014, 1961 (Russian); English translation: Soviet Mathematics 2, 372--375, 1961.

%\bibitem{Feller}
%W.~Feller, {\em On the classical Tauberian theorems}.
%Arch. Math. (Basel) 14, 317-322, 1963.

%\bibitem{Guhr}
%T.~Guhr, A.~M\"uller-Groeling, H.~A.~Weidenm\"uller, {\em Random matrix theories in quantum physics: common concepts.} Physics Reports 299, 189--425, 1998. 

%\bibitem{GiraudMarklofOKeefe}
%O.~Giraud, J.~Marklof, S.~O'Keefe, {\em Intermediate statistics in quantum maps.} J. Phys. A: Math. Gen. 37, No. 28, 303--311, 2004.


\bibitem{fkw-lattice}
L.~Fainsilber, P.~Kurlberg, and B.~ Wennberg.
{\em Lattice points on circles and discrete velocity models for the
              {B}oltzmann equation},
{SIAM J. Math. Anal.}
{37},
No. {6}, {1903--1922 (electronic)}, {2006}.

%\bibitem{HalberstamRichert}
%H.~Halberstam and H.~E.~Richert. {\em On a result of R. R. Hall.} J. Number Theory 11, No. 1, 76--89, 1979.

\bibitem{Hecke}
E.~Hecke. {\em Eine neue Art von Zetafunktionen und ihre Beziehungen zur Verteilung der Primzahlen.} Math. Z. 6, No. 1-2, 11--51, 1920.

%\bibitem{Huxley}
%M.~N.~Huxley, {\em Exponential sums and lattice points. III.} Proc. London Math. Soc. (3) 87, No. 3, 591--609, 2003.

%\bibitem{Iwaniec}
%H.~Iwaniec, {\em Spectral Methods of Automorphic Forms}, 2nd edition. Graduate Studies in Mathematics, Vol. 53, 2002.

%\bibitem{IwaniecKowalski}
%H.~Iwaniec and E.~Kowalski. {\em Analytic Number Theory}, Vol. 53 of {\em American Mathematical Society Colloquium Publications.} Providence, RI, 2004.

%\bibitem{Jakobson}
%D.~Jakobson, {\em Quantum limits on flat tori.} Ann. of Math. (2) 145, 235--266, 1997.

\bibitem{KeatingMarklofWinn2}
J.~P.~Keating, J.~Marklof and B.~Winn, {\em Localised eigenfunctions in \u{S}eba billiards.} J. Math. Phys. 51, no. 062101.

%\bibitem{KeatingMarklofWinn}
%J.~P.~Keating, J.~Marklof and B.~Winn, {\em Value distribution of the eigenfunctions and spectral determinants of quantum star graphs}, Commun. Math. Phys. 241, 421--452, 2003.

%\bibitem{KronigPenney}
%R.~de L.~Kronig and W.~G.~Penney, {\em Quantum Mechanics of Electrons in Crystal Lattices.} Proceedings of the Royal Society of London. Series A, Vol. 130, No. 814, 499--513, 1931.

\bibitem{KU2}
P.~Kurlberg and H.~Uebersch\"ar, {\em Localised semiclassical measures for point scatterers on diophantine tori.} in preparation

%\bibitem{Landau}
%E.~Landau, {\em \"Uber die Einteilung der positiven ganzen Zahlen in vier Klassen nach der Mindestzahl der zu ihrer additiven Zusammensetzung erforderlichen Quadrate.} Arch. Math. Phys. 13, 305--312, 1908.

\bibitem{Landau2}
E.~Landau, {\em Handbuch der Lehre von der Verteilung der Primzahlen. 2 B\"ande.} Chelsea Publishing Co., New York, 1953. 2nd ed., With an appendix by Paul T. Bateman.

%\bibitem{Marklof}
%J.~Marklof, {\em Pair correlation densities of inhomogeneous quadratic forms}. Ann. of Math. (2) 158, No. 2, 419--471, 2003.

%\bibitem{MarklofR}
%J.~Marklof, Z.~Rudnick, {\em Almost all eigenfunctions of a rational polygon are uniformly distributed}. Journal of Spectral Theory 2, 107--113, 2012.

%\bibitem{RF}
%S.~Rahav and S.~Fishman, 
%{\em  Spectral statistics of rectangular billiards with localized perturbations}. 
% Nonlinearity 15 (2002), no. 5, 1541--1594. 

%\bibitem{OraveczRudnickWigman}
%F.~Oravecz, Z.~Rudnick and I.~Wigman, {\em The Leray measure of nodal sets for random eigenfunctions on the torus.} Annales de l'Institut Fourier 58, No. 1, 299--335, 2008.

\bibitem{rieger-sums-of-square-twins}
G.~J. Rieger.
\newblock Aufeinanderfolgende {Z}ahlen als {S}ummen von zwei {Q}uadraten.
\newblock {\em Nederl. Akad. Wetensch. Proc. Ser. A 68 = Indag. Math.},
  27:208--220, 1965.


\bibitem{RU}
Z.~Rudnick, H.~Uebersch\"ar, {\em Statistics of wave functions for a point scatterer on the torus}. Comm. Math. Phys., Vol. 316, No. 3, 763--782, 2012.

\bibitem{RU2}
Z.~Rudnick, H.~Uebersch\"ar, {\em On the eigenvalue spacing distribution for a point scatterer on a flat torus}. Annales Henri Poincar\'e, to appear

%\bibitem{RU2}
%Z.~Rudnick, H.~Uebersch\"ar, {\em On the eigenvalue spacing distribution for a point scatterer on the torus}. preprint, arXiv:1208.5920v1

\bibitem{RT}
M.~Ruzhansky, V.~Turunen, {\em Pseudo-Differential Operators and Symmetries: Background Analysis and Advanced Topips}, Birkh\"auser (2010).

\bibitem{Seba}
P.~ \u{S}eba, {\em Wave chaos in a singular quantum billiard}. Phys. Rev. Lett. 64, 1855--1858, 1990.

%\bibitem{SebaExner}
%P.~Seba, P.~Exner, {\em Point interactions in two and three dimensions as models of small scatterers}. Physics Letters A 222, 1--4, 1996.
 
\bibitem{Shigehara1}
T.~Shigehara, {\em Conditions for the appearance of wave chaos in quantum singular systems with a pointlike scatterer},
Phys. Rev. E, Vol. 50, No. 6, 1994.

%\bibitem{Shigehara2}
%T.~Shigehara, T. Cheon, {\em Wave chaos in quantum billiards with a
%small but finite-size scatterer}, Phys. Rev. E, Vol. 54, No. 2,
%1321--1331, 1996.

%\bibitem{ShigeharaCheon3D}
%T.~Shigehara, T. Cheon, {\em Spectral properties of
%three-dimensional quantum billiards with a pointlike scatterer},
%Phys. Rev. E 55, 6832--684, 1997.

%\bibitem{Shigehara3}
%T.~Shigehara, H. Mizoguchi, T. Mishima, T. Cheon, {\em Chaos Induced
%by Quantization}, 1998.

%\bibitem{Sieber}
%M.~Sieber, 
%{\em Geometrical theory of diffraction and spectral statistics}. 
% J. Phys. A 32 (1999), no. 44, 7679--7689.

%\bibitem{Stoeckmann}
%T. Tudorovskiy, U. Kuhl, H-J. St\"ockmann, {\em Singular statistics revised}, New J. Phys. 12, 2010.

%\bibitem{U}
%H.~Uebersch\"ar, {\em The trace formula for a point scatterer on a compact hyperbolic surface}, J. Math. Phys. 53, 012108, 2012.

\bibitem{U2}
H.~Uebersch\"ar, {\em Quantum Chaos for point scatterers on flat tori}, Phil. Trans. R. Soc. A 372 (2014), 20120509.

\bibitem{Wirsing}
E.~Wirsing. {\em Das Asymptotische Verhalten von Summen \"uber multiplikative Funktionen.} Math. Ann. 143, 75--102, 1961.

%\bibitem{Yesha}
%N.~Yesha, {\em Eigenfunction statistics for a point scatterer on a three-dimensional torus}, Annales Henri Poincar\'e, to appear, arXiv:1207.4696v2.

\bibitem{Yesha2}
N.~Yesha, {\em Uniform distribution for the perturbed eigenfunctions of a point scatterer on the three-dimensional torus}, in preparation

\bibitem{Zelditch} S.~Zelditch, {\em Uniform distribution of
    eigenfunctions on compact hyperbolic surfaces}, Duke Math. J. 55,
  919--941, 1987.

%\bibitem{Z}
%J.~Zorbas, {\em Perturbation of self-adjoint operators by Dirac distributions}. J. Math. Phys. 21, No. 4, 840--847, 1980.

\end{thebibliography}
% \end{document}

\end{document}